\DeclareFontFamily{U}{matha}{\hyphenchar\font45}
\DeclareFontShape{U}{matha}{m}{n}{
      <5> <6> <7> <8> <9> <10> gen * matha
      <10.95> matha10 <12> <14.4> <17.28> <20.74> <24.88> matha12
      }{}
\DeclareSymbolFont{matha}{U}{matha}{m}{n}
\DeclareMathSymbol{\Lt}{3}{matha}{"CE}
\DeclareMathSymbol{\Gt}{3}{matha}{"CF}
\newcommand{\X}{\mathcal{X}}
\newcommand{\N}{\mathbb{N}}
\newcommand{\F}{\mathcal{F}}
\newcommand{\B}{\mathcal{B}}
\newcommand{\Lamb}{\Lambda}
\newcommand{\R}{\ensuremath{\mathbb{R}}}     
\newcommand{\Z}{\ensuremath{\mathbb{Z}}}
\newtheorem{remark}{Remark}
\newtheorem{lemma}{Lemma}
\newtheorem{defi}{Definition}
\newtheorem{theo}{Theorem}
\newtheorem*{theo*}{Theorem}
\newtheorem{prop}{Proposition}
\newtheorem{coro}{Corollary}
\begin{document}
\DeclareGraphicsExtensions{.pdf,.gif,.jpg}

\keywords{Chains of infinite order, coupling, dynamic, phase transition}
\subjclass[2000]{Primary 60G10; Secondary 60G99}


\title[Uniqueness of compatible stationary probability]{Uniqueness of stationary compatible probability measures for chains of infinite order with forbidden transitions}
\author{Gallesco, C.$^1$}
\address{$^1$Departmento de Estat\'istica, Instituto de Matem\'atica, Estat\'istica e Computa\c{c}\~ao Cient\'ifica, Universidade Estadual de Campinas, Brazil}
\email{gallesco@unicamp.br}
 
 \author{Gallo, S.$^2$}
\address{$^2$Departamento de Estat\'istica, Universidade Federal de S\~ao Carlos, Brazil}
\email{sandro.gallo@ufscar.br}

 \author{Takahashi, D. Y.$^3$}
\address{$^3$Instituto do C\'erebro, Universidade Federal do Rio Grande do Norte, Brazil}
\email{takahashiyd@gmail.com}



\maketitle

\epigraph{\emph{In honor of Antonio Galves who introduced us to this beautiful topic}}

\begin{abstract}
In this paper, we consider chains of infinite order on countable state spaces with prohibited transitions. We give a set of sufficient conditions on the structure of the probability kernels of the chains to have at most one stationary  probability measure compatible with the kernel. Our main result extends the uniqueness $\ell^2$ criterion from \cite{johansson/oberg/2003} which was obtained for strongly non-null chains. A particular attention is given to concrete examples, illustrating the main theorem and its corollaries, with comparison to results of the existing literature.
\end{abstract}


\section{Introduction}

Markov chains are stochastic processes characterized by transition probability kernels that describe how a finite portion of the past conditionally defines the probability of a present event. A natural generalization, that we call here chains of infinite order (see \cite{fernandez/maillard/2005} for alternative terminology), allows for transition probabilities that depend on the entire infinite past. These  non-Markovian processes can exhibit complex behaviors not present in finite-order Markov chains, such as non-existence and non-uniqueness (phase transition) of compatible stationary measures, even when the state space, also called {\it alphabet} in this context, is finite.


Much of the existing literature focuses on chains of infinite order with (1) continuous transition probabilities —that is, having some decay of dependence on the past— and (2) some non-nullness assumption like strong non-nullness, where all transitions are permitted, and weak non-nullness, where there exists at least one symbol to which transitions are always permitted. 
General conditions for the existence and uniqueness of compatible stationary measures in non-null continuous cases are well understood \cite{keane/1972,ledrappier/1974,walters/1975, berbee/1986,bressaud_fernandez_galves_1999a, bressaud_fernandez_galves_1999b,comets_fernandez_ferrari_2002,johansson/oberg/2003}. However, for kernels which are null and/or discontinuous, the theory is significantly less developed. There are few works that addressed the discontinuous kernel case (see \cite{ferreira2020non} and references therein). Nevertheless, for null kernels, even under the continuity assumption, we find only partial results and specific examples in the literature. For instance, \cite{fernandez/maillard/2005} obtain uniqueness under a Dobrushin type assumption for compact alphabet. \cite{johansson/oberg/pollicot/2007} give a general condition ensuring uniqueness of compatible measure, but which is not readily verifiable from the structure of the associated transition probability kernel. Finally \cite{desantis/piccioni/2010}  give conditions ensuring perfect simulation of the unique compatible stationary measure, which is  a more restrictive condition than  uniqueness.

Thus, a systematic study of null kernels for chains of infinite order is still missing. The aim of this article is to start filling this gap by providing verifiable uniqueness conditions when there are prohibited transitions. Our main theorem holds for kernels on countable alphabets satisfying an almost sure square summability condition, and a set of conditions on the $0$'s of the kernel that generalize the conditions of ergodic Markov chains. Under these assumptions, our theorem states that there exists at most one stationary compatible measure. Moreover, we prove that the compatible measure is $\beta$-mixing (equivalently, absolutely regular or weak Bernoulli). It is important to note that our square summability assumption on the kernel is reminiscent of \cite{johansson/oberg/2003,johansson/oberg/pollicot/2007,GGT2018}, and allows discontinuities (see examples in Section \ref{sec:examples}). Our condition is more general than the condition in \cite{desantis/piccioni/2010}, and, when the alphabet is finite, it is complementary to the Dobrushin-type condition obtained in \cite{fernandez/maillard/2005} (see \cite{GGT2018} for a more detailed discussion).

To prove our result, we use the absolute continuity criterion for stochastic processes developed by Jacod and Shiryaev \citep{jacod/shiryaev/2002}. Their result allows us to conclude the absolute continuity of the entire processes from the local absolute continuity. The  absolute continuity of the entire process in turn implies uniqueness using the Ergodic Decomposition Theorem. When the transition kernel is strongly non-null, the local absolute continuity becomes trivial and we only need to check the almost sure square summability of the variation of the kernel (see \cite{johansson/oberg/pollicot/2007, GGT2018}). To the best of our knowledge, \cite{johansson/oberg/pollicot/2007} were the first to use the absolute continuity criterion developed by Jacod and Shiryaev to prove uniqueness of chains of infinite order. The key difference between our results  and the main result in \cite{johansson/oberg/pollicot/2007} is that they do not provide general verifiable criteria for local absolute continuity of the processes in the case of null kernels. Our work is motivated by their work and is complementary to it, since we give a proper set of conditions for null kernels ensuring that we have local absolute continuity. For this, we first characterize the structure of null transition probabilities, splitting between those prohibited transitions which depend on finite sequences and those which depend on the entire sequence of the past. Then, the main part of the proof consists in showing that if the kernel locally behaves like an irreducible and aperiodic Markov kernel (see Definition \ref{Deferi} below) and the isolated zeros of the kernels at infinity have limited influence on the future, then every two processes starting from different pasts are locally comparable.






Let us describe the structure of the paper. In Section \ref{sec:notation} we introduce the main notation and definitions. In Sections \ref{sec:results} and \ref{sec:examples} we state the main results and examples of applications, respectively. Finally in Section \ref{sec:proofs} we give the proofs of the results of Section \ref{sec:results}, and provide in the appendix some auxiliary results needed in our proofs.




\section{Notation and basic definitions} \label{sec:notation}
\noindent{\bf Basic notation.} Let $A$ be a countable set endowed with its discrete topology, $\mathcal{X}=A^{\Z}$, $\mathcal{X}_{-} = A^{\Z_-}$ where $\Z_-=\{0,-1,-2,\dots\}$. We endow $\mathcal{X}$ and $\mathcal{X}_-$ with their respective product topologies and their corresponding Borel $\sigma$-algebras $\F$ and $\F_-$. We also denote by $x_i$ the $i$-th coordinate of $x \in \mathcal{X}$ and for $-\infty< i \leq j<\infty$ we write $x^{j}_{i}:=(x_{i},\ldots, x_{j})$ and $x^{i}_{-\infty}:=(\dots,x_{i-1},x_{i})$. For $x\in \mathcal{X}$ and $y \in \mathcal{X}_{-}$, a \emph{concatenation} $yx^{j}_{i}$ is a new sequence $z\in \mathcal{X}_{-}$ with $z^{0}_{i-j} = x^{i}_{j}$ and $z^{i-j-1}_{-\infty} = y$. Let $\o$ be the neutral element of the concatenation operation, that is $ x\o= \o x=x$ for all $x\in \mathcal{X}_{-}$. We will use the convention that if $j<i$, $x_i^j=\o$. Note that we are using the convention that when we scan an element $x\in\mathcal{X}$ from the right to the left we go further into the past. For any $i\ge0$, the set $A^i$ denotes the set of strings of size $i$ ($A^0=\{\o\}$), with no reference concerning indexes, and $A^*:=\cup_{i\ge0}A^i$ denotes the set of finite strings.  Finally, for all $\Lambda \subset \Z$, we will denote by $\F_{\Lambda}$ the $\sigma$-algebra generated by the canonical projections $\eta_j:\X\to A$, defined by $\eta_j(x)=x_j$, for $j\in \Lambda$ and we denote by $\sigma:\mathcal{X}\rightarrow \mathcal{X}$ the shift map over $\mathcal{X}$, defined by $[\sigma(x)]_i=x_{i+1}$. \\

\noindent{\bf Probability kernels.} Consider a measurable function $g: \mathcal{X}_{-}\to \R$.  The \emph{variation rate} (or \emph{continuity rate}) of order $k\geq 1$ of $g$ is defined by
\begin{equation*}
\textrm{var}_{k}(g):=\sup_{x,y\in \mathcal{X}_-}\sum_{a\in  \mathcal{X}_-}|g(xa_{-k}^{0})-g(ya_{-k}^{0})|. 
\end{equation*}
In the case $A$ finite, it is well known that the continuity of $g$ is equivalent to $\lim_{k \to \infty} \textrm{var}_{k}(g)=0$.

A non-negative measurable function $g$ is a {\it probability kernel}  (or simply a \emph{kernel}) if it satisfies
\begin{equation*}
\sum_{a\in A}g(xa)=1
\end{equation*}
for all $x\in \mathcal{X}_{-}$. This function has the same role as a transition matrix for a Markov chain.
As with transition matrices, we need to iterate $g$. So we define for all $n\geq 1$, the {\it iterated kernel} $g_n$ by 
\begin{equation}\label{eq:gn}
g_n(x, v_1^k)=\sum_{y\in \mathcal{X}}\prod_{i=1}^{n-1}g(xy_1^i)\prod_{j=1}^kg(xy_1^{n-1}v_{1}^{j})
\end{equation}
for all $k\geq 1$, $x\in \mathcal{X}_{-}$ and $v\in  \mathcal{X}$ (with the convention that $\prod_{i=l}^m=1$ if $l>m$.) Notice that with this definition, $g(xa)=g_1(x,a)$.\\

\noindent{\bf Stationary probability measures compatible with a kernel.}
Given a probability measure $\mu$ on $\X_-$ and a kernel $g$, we define  the probability measure $P^{\mu}$ on $\mathcal{X}$ such that
\[
P^{\mu}[\eta_{-\infty}^{0}\in B]=\mu[B]
\]
for all measurable $B\subset \mathcal{X}_-$ and for $n\geq 0$
\begin{equation*}\label{compa}
P^{\mu}[\eta_{n+1}=a\mid \eta_{-\infty}^{n}=x]=g(xa)
\end{equation*}
for every $a\in A$ and $\mu$-a.e.~$x$ in $\mathcal{X}_{-}$. A probability measure satisfying the above properties is said {\it compatible} with $g$. If $\mu=\delta_x$ for some $x\in \X_-$, we write $P^x$ instead of $P^{\delta_x}$. Finally, $P^{\mu}$ is {\it stationary} if it is translation invariant: $P^{\mu}\circ\sigma^{-1}=P^\mu$.

 When $A$ is finite, a compactness argument shows that if the kernel $g$ is continuous, at least one compatible stationary probability measure exists (see for example \cite{keane/1972}). When there is more than one stationary probability measure compatible with $g$, we say that there is a \emph{phase transition}, when there is one we usually say that there is \emph{uniqueness}.
The rate of decay of the sequence $(\textrm{var}_{k}(g))_{k\geq 1}$ is related to the uniqueness of a compatible stationary process.

\begin{defi}
A measurable function $g \in \mathcal{W}^{2}$ if for all $x, y \in \mathcal{X}_-$, we have
\begin{align*}
\sum_{k\geq 1} \sum_{a\in A}\Big( &g(xw_{1}^{k}a)-g(yw_{1}^{k}a)  \Big)^2<\infty,\;\; \text{for $P^x$-a.e.~w}.
\end{align*}
\end{defi}
%
%
%
%
%

It is also known that there is uniqueness if $g$ is continuous, strongly non-null ($\inf g>0$) and  belongs to $\mathcal{W}^{2}$ (\cite{johansson/oberg/2003}). A slightly more general positivity assumption that has been made in the literature is that of weak non-nullness $\sum_{a\in A}\inf_{x\in\mathcal X_-} $\\$g(xa)>0$. 
In this paper, we are interested in investigating the uniqueness question for \emph{null} kernels, which may not be weakly non-null. {Having in mind the case of $k$-step Markov chains, we already know that reducibility issues may appear depending on the ``location'' of the $0$'s in the transition matrix, leading to phase-transition in a rather trivial way. However, it is important to observe that the very notion of irreducibility has not been clearly defined, to the best of our knowledge, in the context of non-Markov chains that we consider here. Therefore, an important part of the problem is to define several quantities that will help us to correctly address these questions. } 

\section{Main results} \label{sec:results}
Before we can state our results, we need some further definitions. 

\vspace{0,3cm}

\noindent{\bf Further definitions.} The set $\mathcal D=\{x\in \X_-: \exists a\in A \,\text{ s.t.}\; g(xa)=0\}$ is the set of $0$'s of $g$. We partition it into 
$\mathcal D=\mathcal D^{(<\infty)}\cup\mathcal D^{(\infty)}$ with
\begin{equation}\label{eq:cyl_g}
\mathcal D^{(<\infty)}:=\cup_{i\ge1}\mathcal D^{(i)}:=\cup_{i\ge1}\{x\in \X_-: \exists\; a\in A \,\text{ s.t.}\; g(yx_{-i+1}^0a)=0\; \text{for all}\; y\in \X_-\}
\end{equation}
and
\begin{equation}\label{eq:isolated0}
\mathcal D^{(\infty)}:=\{x\in \X_-: \exists\; a\in A\, \text{ s.t.}\,\,g(xa)=0  \;\text{and}\; x\notin\mathcal D^{(<\infty)} \}.
\end{equation}
In words, $\mathcal D^{(<\infty)}$ is the set of {\it cylinders} of $0$'s of $g$, which can be identified by scanning the process locally, while $\mathcal D^{(\infty)}$ contains the remaining \emph{isolated} $0$'s of $g$ which can only be identified if we see the entire past of the process. Observe that $(\mathcal D^{(i)})_{i\geq 1}$ is a nondecreasing sequence of sets.

We will consider binary square matrices (possibly infinite), that is, matrices whose elements belong to $\{0,1\}$. In this case, the addition and product will always be the boolean ones ($1+1=1$, $0\times 1=0$, etc). With these operations, the set of binary square matrices of a given dimension is a boolean algebra.



\begin{defi}\label{def:MK}
Fix a kernel $g$. For any $K\in \N$ we define the matrices $\underline{M}_K=\underline{M}_K^{(g)}$ and $\overline{M}_K=\overline{M}_K^{(g)} :A^K\times A^K\rightarrow\{0,1\}$ by
$$\underline{M}_K(u,v)
=\left\{
\begin{array}{ll}
1, &\mbox{if}\phantom{*}(u,v)=(x_{-K+1}^{0},x_{-K+2}^{0}a)\;\;\text{and}\;\; g(yx_{-K+1}^{0}a)>0,\; \text{for all}\;\; y,\\
0, &\mbox{ otherwise},
\end{array}
\right.
$$
and
$$\overline{M}_K(u,v)
=\left\{
\begin{array}{ll}
0, &\mbox{if}\phantom{*}(u,v)=(x_{-K+1}^{0},x_{-K+2}^{0}a)\;\;\text{and}\;\; g(yx_{-K+1}^{0}a)=0,\; \text{for all}\;\; y,\\
1, &\mbox{ otherwise}.
\end{array}
\right.
$$
\end{defi}
Observe that $\underline{M}_K\leq \overline{M}_K$ (entrywise) for all $K\in \N$. 

\begin{defi}
\label{Deferi}
The pair of matrices $(\underline{M}_K,\overline{M}_K)$ is \emph{essentially row irreducible (e.r.i.)} for some $K\in \N$, if there exists a unique non empty set $\mathcal P_K\subset A^K$ such that
{\begin{enumerate}[(i)]
\item for all $i \in \mathcal P_K$ and $j\notin \mathcal P_K$, $\overline{M}_K(i,j)=0$;
\item for all $i\in \mathcal P_K$, there exists $n=n(i)$ such that $\underline{M}_K^n(i,j)=1$ for all $j\in \mathcal P_K$ ;
\item for any $x\in \X_-$ such that $x_{-K+1}^{0}\notin \mathcal{P}_K$ we have $P^x$-a.s. 
\[
T_{\mathcal P_K}:=\inf\{m\geq 1: \eta_{m-K+1}^{m}\in \mathcal{P}_K\}<\infty.
\]
\end{enumerate}}
\end{defi}


The concept of e.r.i. will be used to take care of the cylinder sets of $0$'s $\mathcal D^{(<\infty)}$. 

Let us now focus on the isolated $0$'s of $g$, $\mathcal D^{(\infty)}$, and define, for any $x\in \X_-$, the (possibly empty) set of finite strings that visit these $0$'s of $g$,
\begin{equation}\label{eq:Sx}
\mathcal S_x=\{u\in A^*: xu\in\mathcal D^{(\infty)}\}.
\end{equation}

For the reader's convenience we provide a compact reminder of the main notational definitions in Table \ref{tab:notation}.
We now state our main result.
\begin{theo}
\label{theo1}
Consider a kernel $g$ for which there exists $K\ge1$ such that
{\begin{enumerate}[(A)]
\item \label{ass1} the pair $(\underline{M}_{K}, \overline{M}_{K})$ is e.r.i.;
\item \label{ass2} for any $x \in \mathcal{D}^{(<\infty)}\setminus \mathcal{D}^{(K)}$, $x_{-K+1}^{0}\notin \mathcal{P}_K$; 
\item\label{ass3} for any $x \in \X_-$ such that $x_{-K+1}^{0}\in \mathcal{P}_K$ there exists $v_x\in A^*$ which is not substring of any element of $\mathcal S_x$ and such that $g_1(x,v_x)>0$. 
\end{enumerate}}  
If furthermore $\sqrt{g}\in \mathcal{W}^{2}$, then there exists at most one stationary probability measure $\mu$ compatible with $g$. When it exists, $\mu$ is weak Bernoulli.
\end{theo}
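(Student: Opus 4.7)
The plan is to follow the Jacod--Shiryaev route used by \cite{johansson/oberg/pollicot/2007}: show that for ``typical'' pairs of pasts $x,y\in\mathcal X_-$ (in a sense to be made precise via any putative stationary compatible measure), the future laws $P^x$ and $P^y$ are mutually absolutely continuous on $\mathcal F_{[1,\infty)}$. Then, given two stationary compatible measures $\mu_1,\mu_2$, apply ergodic decomposition to $\mu=(\mu_1+\mu_2)/2$: mutual absolute continuity of $P^x$ and $P^y$ for $\mu\otimes\mu$-a.e.\ pair forces every ergodic component to coincide on tail events, hence $\mu_1=\mu_2$. The passage from local to global absolute continuity is supplied by the Jacod--Shiryaev criterion, which requires (i) local absolute continuity of $P^x|_{\mathcal F_{[1,n]}}$ with respect to $P^y|_{\mathcal F_{[1,n]}}$ for all $n$, and (ii) $P^x$-a.s.\ convergence of the Hellinger process.

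\textbf{The Hellinger condition.} The Hellinger process along the filtration $(\mathcal F_{[1,n]})_n$ associated to $(P^x,P^y)$ is, up to constants,
\[
H_n(\omega)\;=\;\sum_{k=1}^{n}\sum_{a\in A}\bigl(\sqrt{g(x\omega_1^{k-1}a)}-\sqrt{g(y\omega_1^{k-1}a)}\bigr)^{2}.
\]
The hypothesis $\sqrt{g}\in\mathcal W^{2}$ is precisely the statement that $H_\infty<\infty$ $P^x$-a.s., so part (ii) of the Jacod--Shiryaev criterion is immediate. Thus the entire burden of the proof reduces to establishing (i).

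\textbf{Local absolute continuity via (A)--(C).} I would prove, for all $n\ge1$ and all words $w_1^n$, that $P^x[\eta_1^n=w_1^n]>0$ implies $P^y[\eta_1^n=w_1^n]>0$ (the reverse implication being symmetric). The argument proceeds in three steps. \emph{First}, by (A), the set $\mathcal P_K\subset A^K$ is an essentially recurrent class of the ``local $K$-history'' chain: from (A)(iii) together with (B), any past whose last $K$ coordinates lie outside $\mathcal P_K$ and which is not a cylinder-zero of depth $K$ hits $\mathcal P_K$ $P^x$-a.s.\ in finite time, while by (B) the cylinder-zero obstructions are all resolved within $K$ steps. \emph{Second}, once the $K$-history belongs to $\mathcal P_K$, (A)(ii) guarantees that powers of $\underline M_K$ connect any two elements of $\mathcal P_K$, yielding positive-probability paths of arbitrary lengths with prescribed endpoints. \emph{Third}, the remaining obstruction comes from the isolated zeros $\mathcal D^{(\infty)}$: condition (C) furnishes, for every past $x$ with $x_{-K+1}^0\in\mathcal P_K$, a ``bridge'' word $v_x$ with $g_1(x,v_x)>0$ that is not a substring of any element of $\mathcal S_x$. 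I would then glue these three ingredients into a bridging argument: given any $w_1^n$ realizable from $x$, I concatenate an appropriate bridge (using (C)) that brings the $y$-process into $\mathcal P_K$ avoiding isolated zeros, and then use the e.r.i.\ structure of $\underline M_K$ to prolong into $w_1^n$ with positive probability under both $P^x$ and $P^y$.

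\textbf{Main obstacle and conclusion.} The delicate point is precisely this simultaneous handling of the two types of zeros in $\mathcal D^{(<\infty)}$ and $\mathcal D^{(\infty)}$: cylinder zeros are detectable by reading $K$ coordinates and are tamed by (A)+(B), but isolated zeros depend on the entire past and can a priori block every candidate continuation, which is exactly why (C) is needed. Verifying that a bridge $v_x$ remains admissible for $y$ as well requires using the fact that the bridge is short enough to be read locally, so that whether it triggers an isolated zero for $y$ is controlled by the same $K$-history that controls it for $x$. Once (i) is in place, Jacod--Shiryaev yields $P^x\sim P^y$, hence uniqueness as explained above. For the weak Bernoulli property, I would note that mutual absolute continuity of the laws conditioned on different pasts translates, by a classical argument using stationarity and the Markov-type coupling interpretation of absolute continuity, into $\beta$-mixing (absolute regularity), which is equivalent to the weak Bernoulli property in the stationary setting.
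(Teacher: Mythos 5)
Your overall route (Jacod--Shiryaev ACS criterion plus ergodic decomposition, with $\sqrt g\in\mathcal W^2$ supplying the Hellinger summability) is the same as the paper's, and your identification of the roles of (A), (B), (C) is essentially right. However, there is a genuine gap in the key step. You propose to prove local absolute continuity \emph{from time $1$}, i.e.\ that $P^x[\eta_1^n=w_1^n]>0$ implies $P^y[\eta_1^n=w_1^n]>0$ for all $n\ge1$ whenever $x_{-K+1}^0,y_{-K+1}^0\in\mathcal P_K$. This is false in general: condition (ii) of the e.r.i.\ definition only guarantees $\underline M_K^{\,n}(i,j)=1$ for $n\ge n(i)$, so the supports of the one-step (and more generally short-time) marginals from $x$ and from $y$ need not coincide. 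In the paper's own ``sparse chain'' example, a past ending in $01$ forces the next symbol to be $1$, while a past ending in $11$ allows both symbols; hence $P^y|_{\F_{[1,1]}}\not\ll P^x|_{\F_{[1,1]}}$. The correct statement (Proposition~\ref{proplac1} in the paper) is that there exists a burn-in time $n=n(x,y)$, built from the aperiodicity constant of Lemma~\ref{rowirredlem}--\ref{lemlac} and the length of the bridge words $w_x,w_y$ furnished by (C), such that $P^x|_{\F_{[n,n+k]}}\sim P^y|_{\F_{[n,n+k]}}$ for all $k\ge1$; the ACS criterion is then applied to the filtration started at $n$, which still gives equivalence on $\F_{[n,\infty)}$ and hence on the invariant $\sigma$-algebra $\mathcal I$ --- all that the uniqueness argument needs.

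A second, related gap concerns pasts $z$ with $z_{-K+1}^0\notin\mathcal P_K$. You fold these into the local absolute continuity argument via the a.s.\ finiteness of the hitting time of $\mathcal P_K$, but since that hitting time is unbounded, no finite-time absolute continuity statement can come out of it. The paper instead treats these pasts only at the level of $\mathcal I$: for invariant $B$, Lemma~\ref{lem4} decomposes $P^z[B]$ over the hitting time of $\mathcal P_K$ (using $B=\sigma^{-n}B$), reducing to pasts whose $K$-history lies in $\mathcal P_K$. Alternatively, for the uniqueness argument one could restrict to $\mu\otimes\mu$-typical pairs as you suggest, but then you must prove that $\mu$ gives zero mass to $K$-histories outside $\mathcal P_K$ --- a fact the paper establishes (via stationarity and $\mu[T_{\mathcal P_K}>n]\to0$) and which is also needed to make your weak Bernoulli argument rigorous: the criterion used there (not-mutual-singularity of $P^x$ and $P^y$ on some $\F_{[k_0,\infty)}$ for $\mu\otimes\mu$-a.e.\ $(x,y)$) is only verified for pairs with $K$-histories in $\mathcal P_K$.
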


Let us briefly comment on the intuition behind assumptions~\eqref{ass1}--\eqref{ass3}. 
The kernel $g$ may vanish on certain pasts, and the structure of its zero set plays a 
central role in the theorem. First, we observe that Assumptions~\eqref{ass1} and \eqref{ass2} control the local zeros of $g$, that is, those in $\mathcal{D}^{(<\infty)}$, while Assumption~\eqref{ass3} addresses the isolated zeros of $g$, those in $\mathcal{D}^{(\infty)}$.
Now, let us be more precise. Assumption \eqref{ass1} requires that the pair 
$(\underline{M}_K, \overline{M}_K)$ is essentially row irreducible, which means that the 
chain behaves, at the level of $K$-step transitions, like an irreducible and aperiodic 
Markov chain on the ``good'' states $\mathcal{P}_K$ (if $\text{card}(A)=\infty$, the e.r.i.~condition is actually slightly stronger than irreducibility and aperiodicity), and that every past eventually 
reaches $\mathcal{P}_K$ with probability one. Assumption~\eqref{ass2} is a consistency 
condition: it ensures that the zeros coming from cylinders of length larger than $K$ do 
not conflict with the irreducibility structure encoded in $\mathcal{P}_K$, such that those pasts are automatically outside of $\mathcal{P}_K$ and will eventually be left. 
Finally, Assumption~\eqref{ass3} 
requires that, for any past in $\mathcal{P}_K$, there exists a finite string that 
avoids all these isolated zeros, ensuring that the process can escape their 
influence with positive probability.

\begin{remark}
It is elucidating to compare our result with the following theorem proved in  
\cite{johansson/oberg/pollicot/2007}.
\begin{theo*}[\cite{johansson/oberg/pollicot/2007}] If the kernel $g$ satisfies \begin{equation} \label{eq:JOP}
\sum_{k\geq 1} \sup_{x,y, w}\sum_{a\in A}\Big(\sqrt{g(xw_{1}^{k}a)}-\sqrt{g(yw_{1}^{k}a)}  \Big)^2<\infty,\;\; 
\end{equation}
then any two distinct ergodic measures compatible with $g$ are not locally comparable. 
\end{theo*}
Local comparability of ergodic measures $\mu$ and $\nu$ means that for all $n \geq 0$ we have $\mu|_{\mathcal{F}_{[0,n]}} \sim \nu|_{\mathcal{F}_{[0,n]}}$ (that is, $\mu|_{\mathcal{F}_{[0,n]}} \ll \nu|_{\mathcal{F}_{[0,n]}}$ and $\nu|_{\mathcal{F}_{[0,n]}} \ll \mu|_{\mathcal{F}_{[0,n]}}$). 

We first observe that \eqref{eq:JOP} implies $\sqrt{g} \in \mathcal{W}^{2}$. Also, if $\inf g > 0$, the local comparability is imediate. Therefore, the above theorem implies that if the kernel is strongly non-null (i.e.~$\inf g > 0$), and satisfies \eqref{eq:JOP}, there is at most one ergodic compatible measure. Beyond this special case, it not straightforward to obtain a criterion based on the kernel $g$ that allows us to decide when the compatible ergodic measures are comparable. Theorem \ref{theo1} provides such a criterion.
\end{remark}

\begin{remark}
We recall that the measure $\mu$ is called \emph{weak Bernoulli} or, equivalently, \emph{absolutely regular} or \emph{$\beta$-mixing} \citep{berbee/1986, bradley/2005}, if 
\begin{equation*}\lim_{n \rightarrow \infty}d_{TV}(\mu|_{\F^-}\otimes\mu|_{\F_{[n, \infty)}}, \mu|_{\F^-\otimes\F_{[n, \infty)}}) = 0,
\end{equation*}
where $d_{TV}$ is the total variation distance. If $\mu$ is weak Bernoulli and the alphabet is finite, we have, in particular, that $\mu$ is isomorphic to a Bernoulli shift.
\end{remark}
\begin{remark}
The assumption \eqref{ass1} accounts for the local $0$'s of the kernel. It ensures some kind of irreducibility and aperiodicity. Since the kernel has infinite dependence, even in a finite alphabet it is possible that a string outside of $\mathcal P_K$ never visits $\mathcal P_K$ with positive probability, that is why we impose condition (iii) in Definition \ref{Deferi}.
 The pair of assumptions \eqref{ass2}--\eqref{ass3} accounts for those $0$'s which come from cylinders larger than $K$ (assumption \eqref{ass2}) and infinity (assumption \eqref{ass3}), so in some sense, nonlocal. 
\end{remark}

Theorem \ref{theo1} will be proved in Section~\ref{sec:proofs}. Notice that neither continuity of $g$ nor finiteness of $A$ is assumed, so existence is not granted, but adding these assumptions we have the following consequences, the first of which is direct.

\begin{coro}
\label{coro1}
Suppose $A$ finite and $g$ continuous. Then, under the assumptions of Theorem \ref{theo1}, there exists exactly one stationary probability measure compatible with~$g$.
\end{coro}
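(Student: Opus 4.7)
The plan is to obtain the corollary as a direct consequence of Theorem \ref{theo1} together with the classical existence theorem for compatible stationary measures of continuous kernels over finite alphabets, already mentioned in Section \ref{sec:notation} and attributed there to \cite{keane/1972}. Theorem \ref{theo1} already supplies the ``at most one'' half under the present hypotheses, so the only thing left to establish is the existence of at least one stationary probability measure compatible with $g$.

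For existence I would run the standard compactness-plus-Ces\`aro scheme. Fix any $x\in \mathcal{X}_-$ and consider the Ces\`aro averages
\begin{equation*}
\mu_n := \frac{1}{n}\sum_{k=0}^{n-1} P^x \circ \sigma^{-k}
\end{equation*}
on $\mathcal{X}$. Since $A$ is finite, $\mathcal{X}=A^{\Z}$ is compact metrizable in the product topology, so the set of Borel probability measures on $\mathcal{X}$ is weakly sequentially compact, and I can extract a subsequence $\mu_{n_j}$ converging weakly to some probability measure $\pi$. A routine Krylov--Bogolyubov type computation shows that any weak limit point of such Ces\`aro averages is $\sigma$-invariant. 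To verify that $\pi$ is compatible with $g$, I would pass to the limit in the identity
\begin{equation*}
\int f(y)\,\ind_{\{y_{n+1}=a\}}\,d\mu_{n_j}(y) = \int f(y)\,g(y_{-\infty}^{n}a)\,d\mu_{n_j}(y),
\end{equation*}
which holds for every bounded continuous $f:\mathcal{X}\to\R$, every $n\in \Z$ and every $a\in A$ (it is inherited from $P^x$ and preserved by the shift). The measure $\mu:=\pi|_{\F_-}$ is then compatible with $g$ in the sense of Section \ref{sec:notation}, and $P^{\mu}=\pi$ is stationary. Combined with Theorem \ref{theo1}, this produces exactly one such measure.

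The only real obstacle is the last step, namely transferring the pointwise compatibility identity from the pre-limit measures to their weak limit. This is precisely where the continuity of $g$ is essential: it ensures that $y\mapsto g(y_{-\infty}^{n}a)$ is a bounded continuous function on $\mathcal{X}$, so that both sides of the displayed identity are weakly continuous functionals of the measure and the equality survives the limit. This is also why the hypotheses of continuity and finiteness of $A$ are added in the corollary. No new machinery beyond Theorem \ref{theo1} and this classical construction is required.
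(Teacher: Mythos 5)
Your proposal is correct and matches the paper's (implicit) argument: the paper calls Corollary \ref{coro1} ``direct'' precisely because Theorem \ref{theo1} gives the ``at most one'' half, while existence for a continuous kernel on a finite alphabet is the classical compactness result already cited to \cite{keane/1972} in Section \ref{sec:notation} (and invoked again at the end of the proof of Corollary \ref{coro2}). Your Krylov--Bogolyubov construction is simply the standard proof of that cited existence result, so the route is essentially the same.
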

When we are given $g$, it is usually straightforward to check whether it belongs to $\mathcal{W}^{2}$ or not. However, for null kernels, it is not always the case that $g\in \mathcal{W}^{2}\Rightarrow\sqrt g\in \mathcal{W}^{2}$, and when it is the case, some further calculations are needed (see Examples \ref{example3} and \ref{example4}). At the cost of some restriction on $\mathcal D$, the next corollary uses a condition directly on $g$ instead of $\sqrt{g}$ (see Examples \ref{sec:markov} and \ref{NMRWDG} for applications).
\begin{coro}
\label{coro2}
Suppose $A$ finite, $g$ continuous, ${g}\in \mathcal{W}^{2}$ and $\mathcal{D}^{(\infty)}=\emptyset$. Then under assumptions \eqref{ass1}--\eqref{ass2} of Theorem \ref{theo1}, there exists exactly one stationary probability measure compatible with~$g$.
\end{coro}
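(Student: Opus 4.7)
The plan is to reduce Corollary~\ref{coro2} to Theorem~\ref{theo1}. Existence of a stationary compatible measure follows from the standard compactness argument for finite alphabets and continuous kernels recalled in Section~\ref{sec:notation}, so the core task is uniqueness. Assumptions \eqref{ass1} and \eqref{ass2} are in the hypotheses; I only need to check \eqref{ass3} and $\sqrt{g}\in\mathcal{W}^2$.

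Condition \eqref{ass3} is immediate: $\mathcal{D}^{(\infty)}=\emptyset$ forces $\mathcal{S}_x=\emptyset$ for every $x\in\X_-$, so picking any single letter $v_x=a\in A$ with $g(xa)>0$ (which exists because $\sum_{a\in A}g(xa)=1$) vacuously satisfies the ``not a substring'' requirement and gives $g_1(x,v_x)>0$.

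The bulk of the work is to deduce $\sqrt{g}\in\mathcal{W}^2$ from $g\in\mathcal{W}^2$. My first observation is that $\mathcal{D}$ is clopen in the compact space $\X_-$: it is closed as the finite union over $a\in A$ of the continuous preimages $\{x:g(xa)=0\}$, and it is open because $\mathcal{D}=\mathcal{D}^{(<\infty)}$ is, by definition, a union of cylinders. Compactness of $\X_-$ then forces $\mathcal{D}$ to be a finite union of cylinders of bounded depth, so on the compact complement $\X_-\setminus\mathcal{D}$ the continuous positive function $g(\cdot,a)$ is bounded below by some $c_0>0$, uniformly in the finitely many letters $a\in A$. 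The target is to upgrade this into a uniform bound $c>0$ such that for every pair $(x,a)$, either $g(xa)=0$ or $g(xa)\ge c$. Granted such a bound, the elementary inequality $(\sqrt{u}-\sqrt{v})^2\le (u-v)^2/c$, which holds because $(\sqrt{u}+\sqrt{v})^2\ge u+v\ge c$ whenever at least one of $u,v$ is $\ge c$ (and is trivial when $u=v=0$), together with $g\in\mathcal{W}^2$, gives
\[
\sum_{k\ge1}\sum_{a\in A}\bigl(\sqrt{g(xw_1^ka)}-\sqrt{g(yw_1^ka)}\bigr)^2\le\frac{1}{c}\sum_{k\ge1}\sum_{a\in A}\bigl(g(xw_1^ka)-g(yw_1^ka)\bigr)^2<\infty\quad P^x\text{-a.s.},
\]
which is exactly $\sqrt{g}\in\mathcal{W}^2$. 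Theorem~\ref{theo1} then yields uniqueness and the weak Bernoulli property.

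The main obstacle is extending the lower bound from $\X_-\setminus\mathcal{D}$ to a uniform $c>0$ also controlling the positive values of $g$ attained inside $\mathcal{D}$: within a cylinder of $\mathcal{D}$, a given letter $a$ may fail to be a cylinder witness at depth $K$ even though it vanishes at some points, so \emph{a priori} $g(\cdot,a)$ could take arbitrarily small positive values there. Handling this rigorously requires iterating the cylinder decomposition letter by letter and exploiting \eqref{ass2} to bound the depth at which per-letter cylinder witnesses can appear whenever the last $K$ coordinates lie in $\mathcal{P}_K$; this is where the e.r.i.\ structure of \eqref{ass1}--\eqref{ass2} genuinely enters the proof.
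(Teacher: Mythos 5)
Your reduction to Theorem \ref{theo1} is the same route the paper takes, and most of it is sound: existence by compactness, assumption \eqref{ass3} holding vacuously because $\mathcal S_x=\emptyset$ when $\mathcal D^{(\infty)}=\emptyset$, and the elementary bound $(\sqrt u-\sqrt v)^2\le (u-v)^2/c$ once one knows that every nonzero value of $g$ is at least $c$. The genuine gap is that you never prove this last fact. You obtain a lower bound only on $\X_-\setminus\mathcal D$, you explicitly acknowledge that the positive values of $g$ taken at pasts inside $\mathcal D$ are not controlled, and you defer the missing step to an unexecuted ``iteration of the cylinder decomposition exploiting \eqref{ass2}.'' That sketch also points in the wrong direction: assumption \eqref{ass2} and the e.r.i.\ structure play no role in this step, and it is not clear how they would produce a lower bound for $g$ on $\mathcal D$.

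The paper closes the step in one line by changing the object over which the infimum is taken. Do not work letter by letter with the sets $\{x: g(xa)=0\}$; instead regard $g$ as a single continuous function on the compact space $\X_-$ (a point of $\X_-$ already encodes ``past plus new letter'') and set $\gamma:=\inf\{g(z):g(z)>0\}$. The hypothesis $\mathcal D^{(\infty)}=\emptyset$ is used exactly here: it guarantees that every zero of $g$ sits inside a cylinder of zeros, so $\{g=0\}$ is open, hence $\{g>0\}$ is closed in the compact space $\X_-$ and therefore compact, and the continuous function $g$ attains a strictly positive infimum $\gamma$ on it. This is precisely the uniform constant $c=\gamma$ you were missing, and it dissolves your worry about small positive values inside $\mathcal D$: a past $x\in\mathcal D$ and a letter $a$ with $g(xa)>0$ contribute the single point $xa\in\{g>0\}$, which the compactness argument covers regardless of what the other letters do at $x$. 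With $\gamma$ in hand, your bound of $\sum_{k}\sum_{a}(\sqrt{g(xw_1^ka)}-\sqrt{g(yw_1^ka)})^2$ by $\gamma^{-1}$ times the corresponding sum without square roots is correct, and the rest of your argument goes through as in the paper.
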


\begin{table}[ht]
\centering
\renewcommand{\arraystretch}{1.3}
\begin{tabular}{lll}
\hline
\textbf{Notation} & \textbf{Def.} &  \textbf{Description} \\
\hline
$D^{(<\infty)}$ &   Disp. \eqref{eq:cyl_g}&  \begin{tabular}[t]{@{}c@{}}Cylinder $0$'s of $g$: identifiable from a finite portion \vspace{-0,2cm}\\ of the past. \end{tabular}    \\
$D^{(\infty)}$  &  Disp.  \eqref{eq:isolated0}& Isolated $0$'s of $g$: require the entire  past to be detected. \\
$\mathcal S_x$           &  Disp. \eqref{eq:Sx}& Finite strings $u \in A^*$ such that $xu \in D^{(\infty)}$. \\
$\overline{M}_K$  &  Def. \ref{def:MK}&\begin{tabular}[t]{@{}c@{}} Binary $A^K\!\times\!A^K$ matrix where $0$'s correspond to uniformly \vspace{-0,2cm}\\ forbidden transitions of length $K$. \end{tabular} \\
$\underline{M}_K$ &   Def. \ref{def:MK}& \begin{tabular}[t]{@{}c@{}}Binary $A^K\!\times\!A^K$ matrix where $1$'s correspond to uniformly     \vspace{-0,2cm}\\ allowed transitions of length $K$.\end{tabular} \\
$\mathcal{P}_K$ &  Def. \ref{Deferi}& Essential class of $A^K$ when $(\overline{M}_K, \underline{M}_K)$ is e.r.i. \\
\hline
\end{tabular}
\vspace{1mm}
\caption{Key notation introduced in Section~3.}
\label{tab:notation}
\end{table}

\section{Applications on some example processes} \label{sec:examples}

To illustrate the above results, we give six examples, two for each result, starting with Corollary \ref{coro2}, having most restrictive assumptions, and ending with Theorem \ref{theo1}, having less restrictive assumptions.

\subsection{$k$-step Markov chains}\label{sec:markov}
 We call a stochastic chain with kernel $g$ a $k$-step Markov chain if $g$ satisfies $\text{var}_k(g)=0$ for some $k\geq 1$.
 Thus, we have $\overline M_K=\underline M_K$ for $K\ge k$. Existence and uniqueness are well-studied questions in this context. For example, on finite alphabet, existence is always granted and uniqueness is equivalent to $\overline M_k$ having a unique closed class. To apply Corollary \ref{coro2} (with $K=k$), we just have to check that $\overline M_k$ has a unique closed class and that this class is aperiodic (see (ii) in Definition \ref{Deferi}). This is equivalent to verifying that the $1$-step Markov kernel on alphabet $A^k$ defined by $p(x^{-1}_{-k}, x^{-1}_{-k+1}a) = g(x^{-1}_{-k},a)$ has a unique recurrent class and that this class is aperiodic. This last condition is a well known sufficient condition for uniqueness. In this example, the key nontrivial assumptions are~\eqref{ass1} when $K=k$, and ~\eqref{ass1}--\eqref{ass2} when $K<k$.
\vspace{0,3cm}

We now turn to non-Markov examples, our main interest. The first is an application of Corollary \ref{coro2}. 

\subsection{Non-Markovian random walks on directed graph}
\label{NMRWDG}
Suppose for this example that $A$ is finite. It is common to see Markov chains  as random walks on the graph of communications resulting from their transition matrix. Here, if  $\underline M_1=\overline M_1$, we can see any chain  compatible with $g$ as a non-Markovian random walk on the oriented graph $\mathcal G=(A,\mathcal E)$ with
\[
\mathcal E=\{(a,b)\in A^2:\underline M_1(a,b)=1\}. 
\]
In that context \cite{desantis/piccioni/2010} (see Example 4 and Corollary 2 therein) state that if $\underline M_1$ is irreducible, aperiodic and $\sum_{n\ge1}\prod_{i=1}^na_i=\infty$ (see \cite[p324]{desantis/piccioni/2010} or \cite{comets_fernandez_ferrari_2002} for the definition of the $a_i$'s), then it is possible to perfectly simulate the unique compatible stationary measure. In view of our Corollary \ref{coro2}, since $A$ is finite and $\underline M_1=\overline M_1$ is irreducible and aperiodic, it automatically follows that $\mathcal{D}^{(\infty)}=\emptyset$ and \eqref{ass1}--\eqref{ass2} are satisfied. Thus if we further assume that $\text{var}_k(g)\rightarrow0$ and $g\in\mathcal W^2$ there exists a unique stationary chain, although we cannot guarantee that there exists a perfect simulation algorithm for it.  

In order to compare with their assumption, observe that $a_i=O(1-\text{var}_i(g)),i\ge1$ on finite alphabets.
Therefore the assumption of \cite[Corollary 2]{desantis/piccioni/2010} that $\sum_{n\ge1}\prod_{i=1}^n a_i=\infty$ is stronger than $\text{var}_k(g)\rightarrow0$ and $g\in\mathcal W^2$ together on finite alphabets. Hence, the nontrivial assumptions of Corollary \ref{coro2} for this example are the square summability condition $g\in\mathcal{W}^2$ and the continuity of $g$.
\vspace{0,3cm}

The next example is an application of Corollary \ref{coro1}. We will use the notation $\underline{a}=\ldots a a\in \X_-$ for the past obtained concatenating infinitely many times the same symbol $a$. 

\subsection{Sparse chain with null cylinder larger than $K$}\label{example3}

We consider $A=\{0,1\}$ and a kernel $g$ defined by
$g(x010)=g(x100)=g(x10000)=g(\underline{0})=0$ for all $x\in \X_-$. Also, we define for all $j\geq 0$ the probability kernel
\begin{equation*}
p_j(1\mid x_{-j}^0)=\frac{2}{3}\cdot {\bf 1}\Big\{\frac{1}{j+1}\sum_{l=0}^jx_{-l}\geq 1/2\Big\}+\frac{1}{3}\cdot {\bf 1}\Big\{\frac{1}{j+1}\sum_{l=0}^jx_{-l}<1/2\Big\}.
\end{equation*}
Then, we complete the definition of $g$ with
\[g(x110)=g(x1000)=\sum_{j=0}^{\infty}\frac{1}{2^{j+1}} p_j(0\mid x_{-j}^0) 
\]
and for $k\geq 3$,
\[g(x1\underline{0}_{-k}^00)=\frac{1}{\sqrt{k}} \sum_{j=0}^{\infty}\frac{1}{2^{j+1}} p_j(0\mid x_{-j}^0),
 \]
for all $x\in \X_-$.
Now, observe that 
\[
   \underline M_2= \begin{bmatrix}
    0 & 0 &1 & 0\\  0 & 0 & 1& 0\\0 & 0 &0 & 1\\0 & 1 & 0& 1
    \end{bmatrix}%
,\;\;\;\;\;\;\overline M_2   = \begin{bmatrix}
     1 & 0 & 1& 0\\  0 & 0 & 1& 0\\0 &0 & 0& 1\\0 & 1 & 0& 1
    \end{bmatrix},
\]
using $00<10<01<11$ to order columns and rows of the matrices. We observe that the pair $(\underline M_2, \overline M_2)$ is e.r.i.~and $\mathcal P_2=\{10, 01, 11\}$. Also, $\mathcal{D}^{(\infty)}=\{\underline{0}\}$ and $\mathcal{D}^{(<\infty)}\setminus \mathcal{D}^{(2)}=\{x1000,\;x\in \X_-\}$. We can easily check that assumptions \eqref{ass1}--\eqref{ass3} of Theorem \ref{theo1} are satisfied.

Let us also show that $\sqrt{g}\in \mathcal{W}^2$. For all $x\in \X_-$, we have for $P^x$-a.e.~$w$ that $w_{k-1}^k\in \{10,01,11\}$ for large enough $k$. On the other hand, observe that for $k\geq 2$
\begin{align*}
\sum_{a\in A}\Big(\sqrt{g(xw_{1}^{k}a)}-\sqrt{g(yw_{1}^{k}a)}\Big)^2=0,\;\;\;\text{for}\;w_{k-1}^k\in \{10,01\},
\end{align*}
and for $w_{k-1}^k=11$,
\begin{align*}
\sum_{a\in A}\left(\sqrt{g(xw_{1}^{k}a)}-\sqrt{g(yw_{1}^{k}a)}\right)^2&\leq \sum_{a\in A}\frac{(g(xw_{1}^{k}a)-g(yw_{1}^{k}a))^2}{g(xw_{1}^{k}a)}\nonumber\\
&\leq \frac{2}{3}\left(\sum_{j\geq k-2}\frac{1}{2^{j+1}}\right)^2\nonumber\\
&=\frac{2}{3}\frac{1}{4^k}.
\end{align*}
Gathering all these facts, we deduce that $\sqrt{g}\in \mathcal{W}^2$. Since $g$ is also continuous, we can apply Corollary \ref{coro1} to obtain existence and uniqueness of the stationary compatible probability measure. 

Note that, in this example, all the assumptions of Corollary \ref{coro1} had to be checked. In particular, assumption \eqref{ass2} has to be verified since $\mathcal{D}^{(<\infty)}\setminus \mathcal{D}^{(2)}$ is not empty.
\vspace{0,3cm}

Here is another application of Corollary \ref{coro1}. 
\subsection{Linear autoregressive model}\label{example4}
Let $A=\{-1,1\}$ and consider a sequence of real numbers $\alpha_i>0$, $i\ge1$ s.t. $\sum_i\alpha_i=1/2$, and put for any $x\in \mathcal X_-$
\[
g(x)=\frac{1}{2}+\epsilon x_0\sum_{i\ge1}\alpha_ix_{-i}
\]
where $\epsilon\in\{-1,1\}$.  
 
Notice that if $\epsilon=1$, then assumption \eqref{ass3} of Theorem \ref{theo1} is not fulfilled, because $g_n(\underline{1},1)=1$ for any $n$ (and the same for $\underline{-1}$). This case is similar to the case of reducible Markov chain, since $\delta_{\{1\}^{\Z}},\delta_{\{-1\}^{\Z}}$ are both trivial stationary measures. 

We therefore assume $\epsilon=-1$. Then for all $K\ge1$, $\mathcal D^{(K)}=\emptyset$, the matrix $\overline M_K= 1$ (where 1 is the all-ones matrix) while $\underline{M}_K(\underline{-1}_{-K+1}^0,\underline{-1}_{-K+1}^0)=\underline{M}_K(\underline{1}_{-K+1}^0,\underline{1}_{-K+1}^0)=0$. It follows that $(\underline M_2,\overline M_2)$ is e.r.i with $\mathcal P_2=A^2$ and assumptions \eqref{ass1}--\eqref{ass2} hold. Finally, notice that  $\mathcal D^{(\infty)}=\{\underline{-1},\underline{1}\}$ and taking $v_{\underline{-1}}=1,v_{\underline{1}}=-1$ we see that \eqref{ass3} also holds.

Let us now check that if $\sum_{k\geq 1}(\sum_{i\geq k+1}\alpha_i)^2<\infty$ then $\sqrt{g}\in \mathcal{W}^2$. For any $x,y,w\in \mathcal{X}_-$ and $a\in A$, we have
\begin{align*}
\left(\sqrt{g(xw_{1}^{k}a)}-\sqrt{g(yw_{1}^{k}a)}\right)^2&=\left(\frac{{g(xw_{1}^{k}a)}-{g(yw_{1}^{k}a)}}{\sqrt{g(xw_{1}^{k}a)}+\sqrt{g(yw_{1}^{k}a)}}\right)^2\nonumber\\
&\le \Bigg(\frac{a\sum_{i\ge k+1}\alpha_i(y_{-i}-x_{-i})}{\sqrt{g(xw_{1}^{k}a)}}\Bigg)^2\wedge 1.
\end{align*}
Since the model is binary and symmetric, in order to check that $\sqrt{g}\in\mathcal W^2$ it is enough to check that for all $x$, 
\begin{equation}
\label{sumyui}
\sum_k\Bigg[\frac{\left(\sum_{i\ge k+1}\alpha_i\right)^2}{g(xw_{1}^{k}1)}\wedge 1\Bigg]<\infty,
\end{equation}
for $P^x$ almost every $w$. For this, observe that 
\begin{align*}
g(xw_{1}^{k}1)=\frac{1}{2}-\sum_{i=1}^k\alpha_iw_{i}-\sum_{i=k+1}^{\infty}\alpha_i x_{-i+k+1}.
\end{align*}
Since the $\alpha_i$ are positive numbers, we deduce for all $x$, 
$$P^x\left[\lim_{k\to \infty}\sum_{i=1}^k\alpha_iw_{i}<1/2\right]=1,$$
since $P^x[w_{i}=1, \forall i\geq 1]=0$. 


Now, fix $x$ and let $w\in \{\lim_{k\to \infty}\sum_{i=1}^k\alpha_iw_{i}<1/2\}$. We deduce that there exists $\delta=\delta(x,w)>0$ such that, for all sufficiently large $k$,
\begin{equation*}
\sum_{i=1}^k\alpha_iw_{i}\leq \frac{1}{2}-\delta
\end{equation*}
and 
\begin{equation*}
\sum_{i=k+1}^{\infty}\alpha_i x_{-i+k+1}\leq \frac{\delta}{2}.
\end{equation*}
Therefore, we obtain that
\begin{align*}
g(xw_{1}^{k}1)\geq\frac{1}{2}-\Big(\frac{1}{2}-\delta\Big)-\frac{\delta}{2}\geq \frac{\delta}{2}
\end{align*}
and finally, using (\ref{sumyui}),
\begin{align*}
\sum_{k\geq 1}\left(\sqrt{g(xw_{1}^{k}a)}-\sqrt{g(yw_{1}^{k}a)}\right)^2<\infty
\end{align*}
$P^x$-a.s.~when $\sum_k\left(\sum_{i\ge k+1}\alpha_i\right)^2<\infty$. 
 Observe that for this model, the condition $\sum_{k\geq 1}(\sum_{i\geq k+1}\alpha_i)^2<\infty$ is equivalent to $(\text{var}_k(g))_k\in\mathcal \ell^2$. Hence, if $(\text{var}_k(g))_k\in\mathcal \ell^2$, we can apply Corollary \ref{coro1}, to obtain the existence of a unique stationary probability measure compatible with $g$.
A similar example is considered by \cite{desantis/piccioni/2010} with the special sequence $\alpha_i=\frac{1}{2^{i+1}},i\ge1$, to obtain a perfect simulation algorithm, a stronger property that in particular implies existence and uniqueness. 

In this example, the key nontrivial assumption is~\eqref{ass3}, which fails 
when $\epsilon=1$ and illustrates its necessity to obtain uniqueness of the invariant compatible probability. We also obtain a sufficient condition to obtain $\sqrt{g}\in\mathcal{W}^2$.
\vspace{0,3cm}

The following binary example shows an application of Theorem \ref{theo1} when the kernel can be discontinuous.

\subsection{The renewal process}
Let $A=\{0,1\}$ and define for any $x\in\mathcal X$ and $n\in\mathbb Z$ the function $\ell(x_{-\infty}^n)=\min\{i\ge0:x_{n-i}=1\}$, counting the number of zeros at the end of $x_{-\infty}^n$ (with the convention that $\ell(\underline 0)=+\infty$). Consider also a $[0,1]$-valued sequence $q_i$, $i\in \mathbb Z_+\cup\{+\infty\}$. For any $x\in\mathcal X_-$, let $g(x1)=q_{\ell(x)}$. In the following, we assume that $q_i\in(0,1)$ for any  $i\in \Z_+$.

This example is enlightening to understand Theorem \ref{theo1}. This is mainly due to two facts. First, the complete picture concerning existence and uniqueness, with respect to the choice of the sequence $q_i$, is known in the literature (see for instance \cite{cenac/chauvin/paccaut/pouyanne/2010}):
\begin{enumerate}
\item If $q_\infty=0$
\begin{enumerate}
\item with $\sum\prod(1-q_i)=\infty$: there is a unique stationary measure (the trivial measure $\delta_{\{0\}^\mathbb Z}$);
\item with $\sum\prod(1-q_i)<\infty$: there are more than one stationary measure ($\delta_{\{0\}^\mathbb Z}$ and the renewal measure with infinitely many $1$'s are stationary in that case).
\end{enumerate}
\item If $q_\infty>0$
\begin{enumerate}
\item with $\sum\prod(1-q_i)=\infty$: there exists no stationary measure;
\item with $\sum\prod(1-q_i)<\infty$: there is a unique stationary measure (the renewal measure with infinitely many $1$'s).
\end{enumerate}
\end{enumerate}
Second, as already noticed in \cite{GGT2018}, it exemplifies the role of the $P^x$-a.s. in the definition of $g\in\mathcal W^2$. To see that, note that if for some $1\leq j\leq k$, $w_j=1$, then $\sqrt{g(xw_{1}^{k}a)}=\sqrt{g(yw_{1}^{k}a)}$ for any $x\in \mathcal X_-$ and $a\in A$. Suppose then that for some $x\in\mathcal X_-$ we have that $P^x(w_i=0,i\ge1)=0$, then for $P^x$-a.e. $w\in \mathcal X_+$, there exists $N_w\geq 1$ such that 
\[
\sum_{k\geq 1} \sum_{a\in A}\Big( \sqrt{g(xw_{1}^{k}a)}-\sqrt{g(yw_{1}^{k}a)}  \Big)^2=2\sum_{k=1}^{N_w}\Big( \sqrt{g(xw_{1}^{k}a)}-\sqrt{g(yw_{1}^{k}a)}  \Big)^2<\infty.
\]
On the other hand, a simple calculation gives for any $x\neq \underline{0}$ 
\[P^x[w_i=0,i\ge1]=\lim_k\prod_{l=\ell(x)}^{\ell(x)+k}(1-q_l)=0 \,\Leftrightarrow\,\prod_{l=0}^\infty(1-q_l)=0. \]
Hence we may have $\sqrt{g}\in\mathcal W^2$ although $g$ is not continuous.

Regarding Theorem \ref{theo1}, 
\begin{itemize}
\item If $q_\infty=0$ \\
Assumption \eqref{ass1} is not satisfied for any $K\geq 1$ and it follows that Theorem \ref{theo1} does not apply. As we saw in cases (1a) and (1b) above, either there were many stationary measures or only $\delta_{\{0\}^\mathbb Z}$. 
Despite uniqueness, case (1b) is kind of  ``pathological'' since the invariant measure 
is fully concentrated on the unique absorbing point $\underline{0}$ (observe that assumption \eqref{ass3} is not verified for $x=\underline{0}$).
\item If $q_\infty>0$\\
 Assumption \eqref{ass1} is satisfied with $K=1$ and $\mathcal P=A$. Now, if $\prod_l(1-q_l)=0$ we have $\sqrt{g}\in\mathcal W^2$ as discussed above, we conclude that Theorem \ref{theo1} applies, and therefore that there exists at most one stationary measure. Indeed, we are in case (2) above, and either there is no stationary measure, as in (2a), or there is exactly one stationary measure, as in (2b). If otherwise $\prod_l(1-q_l)>0$, Theorem \ref{theo1} does not apply and indeed in that case we are in case (2b) and no stationary measure exists.
\end{itemize}

In this example the nontrivial assumption is~\eqref{ass1}: 
when $q_\infty=0$ it fails and Theorem~\ref{theo1} does not apply, 
correctly reflecting the possibility of phase transition or pathological 
uniqueness; when $q_\infty>0$, assumption~\eqref{ass1} holds and the 
theorem applies whenever $\prod_l(1-q_l)=0$.
\vspace{3mm}

For our last example also, the corollaries do not apply as the alphabet is infinite and we need to invoke Theorem \ref{theo1}. 
\subsection{Example with infinite alphabet}
Let $A=\Z_+$ and consider  a positive constant $\gamma$ and a sequence of real numbers $(\beta_i)_{i\ge 0}$, such that $\sum_{i\geq 0}|\beta_i| <\infty$. For any $x\in \mathcal X_-$, define
\begin{equation*}
v(x)=\exp\left\{ -\sum_{j\geq 0}\beta_j(x_{-j}\wedge \gamma) \right\}.   
\end{equation*}
The kernel is now defined, for any $x\in \mathcal X_-$, by $g_1(x,a)=e^{-v(x)}\frac{v(x)^a}{a!}$ if $x_{0}=0$, and $g_1(x,0)=1$ otherwise.
We easily check that $\underline{M}_1(0,j)=\underline{M}_1(j,0)=1$ for all $j\in \Z_+$, and that all other entries vanish.
Moreover $\underline{M}_1=\overline{M}_1$, $\mathcal{P}_1=A$, $\mathcal{D}^{(<\infty)}=\mathcal{D}^{(1)}=\{x\in \X_-:x_0\neq 0\}$ and $\mathcal{D}^{(\infty)}=\emptyset$. From here it is not hard to see that the assumptions \eqref{ass1}--\eqref{ass3} of Theorem \ref{theo1} are satisfied for $K=1$. Now, we show that if $\sum_{k\geq 1}(\sum_{j\geq k+1}|\beta_j|)^2<\infty$ then $\sqrt{g}\in \mathcal{W}^2$ and therefore by Theorem \ref{theo1} there exists at most one invariant compatible probability measure.
We recall that if $w_k\neq 0$, $g(xw_1^k0)=1$ and therefore
\begin{align*}
 \sum_{a\in A}\left(\sqrt{g(xw_1^ka)}-\sqrt{g(yw_1^ka)}  \right)^2=0.
\end{align*}
Hence, let us consider $w_k=0$. In this case, after some elementary computations we obtain for $k\geq 1$,
\begin{align*}
 \sum_{a\in A}\left(\sqrt{g(xw_1^ka)}-\sqrt{g(yw_1^ka)}\right)^2
 &\leq \sum_{a\in A}\frac{\left(g(xw_1^ka)-g(yw_1^ka)  \right)^2}{g(xw_1^ka)}\nonumber\\
 &=\sum_{a\in A}e^{-v(xw_1^k)}\frac{v(xw_1^k)^a}{a!}\nonumber\\
&\phantom{*****} \times \left( 1-\left(\frac{v(yw_1^k)}{v(xw_1^k)}\right)^ae^{-(v(yw_1^k)-v(xw_1^k))} \right)^2\nonumber\\
 &=\exp\left\{\frac{(v(xw_1^k)-v(yw_1^k))^2}{v(xw_1^k)}\right\}-1 \nonumber\\
 & \leq \exp\left\{c(v(xw_1^k)-v(yw_1^k))^2\right\}-1
\end{align*}
for some positive constant $c$ (to obtain the second equality just expand the square and sum the series). Then, observe that
\begin{align*}
|v(xw_1^k)-v(yw_1^k)|\leq 2\gamma\sum_{j\geq k+1}|\beta_j|
\end{align*} and as $k\to \infty$,
\begin{align*}
\exp\left\{c(v(xw_1^k)-v(yw_1^k))^2\right\}-1\sim c(v(xw_1^k)-v(yw_1^k))^2.
\end{align*}
By the equivalence criterion for non-negative series, we conclude that 
\begin{equation}
\label{sumcondinf}
\sum_{k\geq 1}\left(\sum_{j\geq k+1}|\beta_j|\right)^2<\infty
\end{equation}
implies $\sqrt{g}\in \mathcal{W}^2$. In this example, we can actually apply Theorem 5.1 of \cite{johansson/oberg/pollicot/2007} to obtain the existence of the invariant measure. Indeed, using the notations of \cite{johansson/oberg/pollicot/2007}, we can take $K=\exp{\exp(\gamma \sum_{i\geq 0}|\beta_i|)}$ and $\pi$ equal to the Poisson law with parameter $e^{\gamma \sum_{i\geq 0}|\beta_i|}$. Combining with our Theorem \ref{theo1}, we conclude that there exists a unique stationary compatible probability.

In this example, the corollaries do not apply due to the infinite alphabet, so the full strength of Theorem~\ref{theo1} is needed. The key nontrivial 
assumption is $\sqrt{g}\in\mathcal{W}^2$, whose verification is implied by the 
summability condition (\ref{sumcondinf}).

\section{Proofs of the results}\label{sec:proofs}

We start this section with some technical lemmas.

\begin{lemma}
\label{rowirredlem}
Let $(\underline{M}_K,\overline{M}_K)$ be e.r.i.~for some $K\in \N$. Then for all $i\in \mathcal P_K$ and all $m\geq n$ (where $n$ is from Definition \ref{Deferi} (ii)) we have $\underline{M}_K^m(i,j)=1$ for all $j\in \mathcal P_K$.
\end{lemma}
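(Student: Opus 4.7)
The plan is to fix $i \in \mathcal{P}_K$, write $n = n(i)$ for the integer provided by Definition~\ref{Deferi}(ii), and prove the statement by induction on $m \geq n$, with the base case $m = n$ given by the definition itself. The key structural observation I would invoke at every step is that $\mathcal{P}_K$ is \emph{forward-invariant} under $\underline{M}_K$: since $\underline{M}_K \leq \overline{M}_K$ entrywise, property~(i) of Definition~\ref{Deferi} forces $\underline{M}_K(u,v) = 0$ whenever $u \in \mathcal{P}_K$ and $v \notin \mathcal{P}_K$, so every boolean path starting in $\mathcal{P}_K$ stays in $\mathcal{P}_K$.

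For the inductive step from $m$ to $m+1$, I would fix an arbitrary $j \in \mathcal{P}_K$ and exploit the base identity $\underline{M}_K^n(i,j) = 1$ to extract a realized path $i = k_0 \to k_1 \to \cdots \to k_n = j$ in the boolean graph of $\underline{M}_K$. Setting $k := k_{n-1}$, the forward-invariance observation gives $k \in \mathcal{P}_K$, while $\underline{M}_K(k,j) = 1$ by construction. The inductive hypothesis, applied at $k \in \mathcal{P}_K$, yields $\underline{M}_K^m(i,k) = 1$, and boolean composition then gives
\[
\underline{M}_K^{m+1}(i,j) \;\geq\; \underline{M}_K^m(i,k) \cdot \underline{M}_K(k,j) \;=\; 1.
\]
Since $j \in \mathcal{P}_K$ was arbitrary, the induction closes.

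The only subtlety I anticipate concerns the extraction of the penultimate vertex $k = k_{n-1}$, which tacitly uses $n \geq 1$. In the degenerate case $n(i) = 0$ we would need $\mathcal{P}_K = \{i\}$ and the conclusion would force a self-loop at $i$; this can be handled either by replacing $n$ by $\max\{n,1\}$ throughout or by reading Definition~\ref{Deferi}(ii) as implicitly excluding the empty-aperiodic case (which is the regime of genuine interest). No other step of the argument involves any real difficulty, since all that is used beyond the definition is the order relation $\underline{M}_K \leq \overline{M}_K$ and standard properties of boolean matrix multiplication.
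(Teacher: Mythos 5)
Your proof is correct and follows essentially the same route as the paper: the paper also reduces to the single step $m=n\to n+1$ by observing that every $l\in\mathcal P_K$ admits a predecessor $k\in\mathcal P_K$ with $\underline{M}_K(k,l)=1$, and then composes. You merely make explicit two points the paper leaves implicit, namely that the forward-invariance of $\mathcal P_K$ (from Definition~\ref{Deferi}(i) and $\underline{M}_K\le\overline{M}_K$) is what guarantees the penultimate vertex lies in $\mathcal P_K$, and that one should read $n(i)\ge 1$; both are welcome clarifications rather than deviations.
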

\noindent
\begin{proof}
Consider $i$ and $n=n(i)$ such that $\underline{M}_K^n(i,j)=1$ for all $j\in \mathcal P_K$.
It is enough to show that the above property remains true for $m=n+1$. By Definition \ref{Deferi} (ii), for all $l\in \mathcal P_K$ there must exist some $k\in \mathcal P_K$ such that $\underline{M}_K(k,l)=1$. Thus, we obtain that $\underline{M}_K^m(i,j)=1$ for all $j\in \mathcal P_K$.
\end{proof}
\begin{lemma}
\label{lemlac}
 Let $(\underline{M}_K,\overline{M}_K)$ be e.r.i.~for some $K\in \N$. For all $x\in \X_-$ such that $x_{-K+1}^0 \in \mathcal P_K$, there exists $n_x$ such that for all $n\geq n_x$, $g_n(x, v)>0$ for $v\in \mathcal P_K$ and $g_n(x, v)=0$ for $v\notin \mathcal P_K$.
\end{lemma}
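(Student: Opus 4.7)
\emph{Proof plan.} The natural starting point is to read the iterated kernel as a probability: for $v=v_1^K\in A^K$ and $n\ge 1$, the definition~\eqref{eq:gn} unpacks to
\[
g_n(x,v)=P^x\bigl[\eta_n^{n+K-1}=v\bigr],
\]
i.e.\ the $P^x$-probability that the $A^K$-valued state $\eta_{m-K+1}^m$ at time $m=n+K-1$ equals $v$. The strategy is to translate property~(i) of Definition~\ref{Deferi} (through $\overline{M}_K$) into an almost-sure confinement to $\mathcal P_K$, and property~(ii) (through $\underline{M}_K$ and Lemma~\ref{rowirredlem}) into explicit positive-probability paths hitting every state of $\mathcal P_K$.

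For the vanishing half, I would prove by induction on $m\ge 0$ that $\eta_{m-K+1}^m\in\mathcal P_K$ holds $P^x$-almost surely. The base case $m=0$ is the hypothesis $x_{-K+1}^0\in\mathcal P_K$. For the inductive step, suppose $\eta_{m-K+1}^m\in\mathcal P_K$ a.s.\ and that the candidate next state $\eta_{m-K+2}^{m+1}$ lies outside $\mathcal P_K$. Property~(i) of Definition~\ref{Deferi} then yields $\overline M_K(\eta_{m-K+1}^m,\eta_{m-K+2}^{m+1})=0$, which by the very definition of $\overline M_K$ means $g(y\,\eta_{m-K+1}^m\,\eta_{m+1})=0$ for every $y\in\X_-$; in particular, taking $y=\eta_{-\infty}^{m-K}$ gives $g(\eta_{-\infty}^m\eta_{m+1})=0$. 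Consequently the conditional probability of leaving $\mathcal P_K$ at step $m+1$ vanishes and the induction closes. Hence for every $n\ge 1$ and every $v\notin\mathcal P_K$ the event $\{\eta_n^{n+K-1}=v\}$ is $P^x$-null, so $g_n(x,v)=0$.

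For the positivity half, I would set $n_x=\max\{1,\,n(x_{-K+1}^0)-K+1\}$, where $n(x_{-K+1}^0)$ is the integer provided by Definition~\ref{Deferi}(ii). For $n\ge n_x$ we have $n+K-1\ge n(x_{-K+1}^0)$, so Lemma~\ref{rowirredlem} gives $\underline M_K^{n+K-1}(x_{-K+1}^0,v)=1$ for every $v\in\mathcal P_K$. This produces a path $x_{-K+1}^0=i_0\to i_1\to\cdots\to i_{n+K-1}=v$ in $\underline M_K$; because the only valid transitions in $A^K$ drop the leftmost symbol and append a new symbol on the right, the path encodes a unique symbol sequence $a_1,\dots,a_{n+K-1}\in A$, and since the terminal state is $v=v_1^K$ its last $K$ entries satisfy $a_n^{n+K-1}=v$. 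The single-step factor at position $l+1$ of $P^x[\eta_1^{n+K-1}=a_1^{n+K-1}]$ is $g(x\,a_1^l\,a_{l+1})$, which equals $g(y\,i_l\,a_{l+1})$ for the appropriate tail $y$ of $xa_1^l$ and is strictly positive by the condition $\underline M_K(i_l,i_{l+1})=1$. The product of these factors is therefore strictly positive, and since this event is contained in $\{\eta_n^{n+K-1}=v\}$ we conclude $g_n(x,v)>0$.

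The only delicate point is the bookkeeping between the iteration index $n$ of $g_n$ and the length $n+K-1$ of the corresponding path in $\underline M_K$, together with the observation that any such path ending at $v=v_1^K$ must have $v_1,\dots,v_K$ as its last $K$ appended symbols. Once this is settled, the argument is a direct unwinding of Definition~\ref{Deferi}, Lemma~\ref{rowirredlem} and the probabilistic meaning of~\eqref{eq:gn}.
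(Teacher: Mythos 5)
Your proof is correct and follows essentially the same route as the paper's: the vanishing half is the observation that $\overline{M}_K$ confines the $K$-window to $\mathcal P_K$ (the paper phrases this as dominating the boolean sum--product for $g_n$ by a power of $\overline{M}_K$, you phrase it as an almost-sure induction), and the positive half extracts an explicit positive-probability path from $\underline{M}_K^{n+K-1}(x_{-K+1}^0,v)=1$ supplied by Lemma \ref{rowirredlem}, which is exactly the paper's lower bound of that sum--product by the same matrix power. The only differences are cosmetic: you read \eqref{eq:gn} probabilistically where the paper uses its $\delta$-calculus, and your exponent $n+K-1$ is the correct step count (the paper's $n+K$ is an immaterial off-by-one).
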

\noindent
\begin{proof}
 Fix $x\in\X_-$ such that $u_x:=x_{-K+1}^0 \in \mathcal P_K$. Since the pair $(\underline M_{K},\overline M_{K})$ is e.r.i, by Lemma \ref{rowirredlem}, there exists $\underline n$ such that 
for any $n\ge \underline n$, $\underline{M}_{K}^n(u_x,v)=1$ for any $v\in \mathcal P_K$.

Let us define for $w\in \mathcal{X}_-$, the binary variables
\[
\delta(w)=\left\{
\begin{array}{ll}
1, &\mbox{if}\phantom{*}g(w)>0,\\
0, &\mbox{ otherwise}
\end{array}
\right.
\]
and observe that $g_n(x, z_1^{K})>0$ for $n\geq 2$ (see \eqref{eq:gn}),  if and only if
\[
\sum_{w}\prod_{i=1}^{n-1}\delta(xw_1^i)\prod_{j=1}^{K}\delta(xw_1^{n-1}z_{1}^{j})=1,
\]
we use here the boolean operations when multiplying and  adding binary variables.

Consider without loss of generality that $\underline n\ge K+2$. By definition of $\underline{M}_K$, we have for any $z_1^{K}\in\mathcal P_K$ and $n\ge \underline n$  that
\begin{align}\label{eq:geq}
\sum_{w}\prod_{i=1}^{n-1}\delta(xw_1^i)&\prod_{j=1}^{K} \delta(xw_1^{n-1}z_{1}^{j})\nonumber\\
&\ge \sum_{w} \prod_{l=1}^{{K}} \underline{M}_{K}(x_{l-{K}}^{0}w_1^{l-1},x_{l-{K}+1}^{0}w_1^l)\prod_{l={K}+1}^{n-1} \underline{M}_{K}( w_{l-{K}}^{l-1}, w_{l-{K}+1}^l) \nonumber\\
&\phantom{****}\times \prod_{j=1}^{K}\underline{M}_{K}(w_{n+j-{K}-1}^{n-1}z_1^{j-1}, w_{n+j-{K}}^{n-1}z_1^j)\nonumber\\
&=\underline{M}_{K}^{n+K}(u_x,z_1^{K})=1.
\end{align}
 We can apply the same reasoning once again with $\overline M_K$ instead of $\underline M_K$. For this, we just have to replace $\underline n$ by $1$ and $\geq$ by $\leq$ in \eqref{eq:geq} to get that $g_n(x, z_1^{K})\leq \overline{M}_{K}^{n+K}(u_x,z_1^{K})=0$ for  $n\geq 1$ and $z_1^K\notin  \mathcal P_K$. Finally, take $n_x=\underline n$ to obtain the desired result.
\end{proof}
\begin{prop}
\label{proplac1}
Under assumptions \eqref{ass1}--\eqref{ass3} of Theorem \ref{theo1}, for all  $x, y\in \X_-$ such that $x_{-K+1}^0$ and $y_{-K+1}^0 \in \mathcal P_K$, there exists $n=n(x,y)$ such that 
$P^{x}|_{\F_{[n,n+k]}} \sim P^{y}|_{\F_{[n,n+k]}}$ for all $k\geq 1$.
\end{prop}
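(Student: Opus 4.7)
The plan is to show that, for $n=n(x,y)$ large, the supports $\{\xi\in A^{k+1}:P^x[\eta_n^{n+k}=\xi]>0\}$ and the analogous one for $y$ coincide on every $\mathcal F_{[n,n+k]}$, $k\geq 1$. Since this $\sigma$-algebra is atomic, with atoms the cylinders $\{\eta_n^{n+k}=\xi\}$, equivalence of the two measures reduces to equality of their supports; and because supports on $\mathcal F_{[n,n+k]}$ are projections of those on $\mathcal F_{[n,n+k']}$ for $k'\geq k$, it suffices to handle $k\geq K-1$. I will identify each support with the $x$- and $y$-independent set of $\overline M_K$-feasible strings starting in $\mathcal P_K$.

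First, applying \eqref{ass3}, I pick $v_x\in A^*$ with $g_1(x,v_x)>0$ and $v_x$ not a substring of any element of $\mathcal S_x$. This property immediately gives $v_xw\notin\mathcal S_x$ for every $w\in A^*$ (otherwise $v_x$ itself would be a substring of $v_xw\in\mathcal S_x$), hence $xv_xw\notin\mathcal D^{(\infty)}$; I will call such pasts \emph{clean} for $x$. If $(xv_x)_{-K+1}^0\notin\mathcal P_K$, I extend $v_x$ using Definition \ref{Deferi}(iii) (the a.s.~finiteness of $T_{\mathcal P_K}$ implies a finite, positive-probability extension) to obtain $v_x^*\in A^*$ with $g_1(x,v_x^*)>0$, such that $xv_x^*$ is clean and its $K$-suffix $u^*$ lies in $\mathcal P_K$. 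Define $v_y^*$ analogously.

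Second, I establish the core local characterization: from any clean past with $K$-suffix $r\in\mathcal P_K$ and for any $a\in A$, one has $g(\mathrm{past}\cdot a)>0$ iff $\overline M_K(r, r_2^Ka)=1$. Indeed, a zero would place $\mathrm{past}$ in $\mathcal D$; cleanliness rules out $\mathcal D^{(\infty)}$, and \eqref{ass2} combined with $r\in\mathcal P_K$ rules out $\mathcal D^{(<\infty)}\setminus\mathcal D^{(K)}$, leaving only $\mathcal D^{(K)}$, which is precisely $\overline M_K(r,r_2^Ka)=0$. Moreover, Definition \ref{Deferi}(i) ensures that $\mathcal P_K$ is $\overline M_K$-closed, so any $\overline M_K$-feasible trajectory launched from a clean $\mathcal P_K$-past stays in $\mathcal P_K$ and inherits cleanliness (as a continuation of $xv_x^*$).

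Combining these with Lemma \ref{rowirredlem}, for any $\overline M_K$-feasible $\xi\in A^{k+1}$ ($k\geq K-1$) with $\xi_1^K\in\mathcal P_K$ and all $n$ above some threshold $N_x$ (depending on $|v_x^*|$, $n(u^*)$, and the $n_x$ of Lemma \ref{lemlac}), I construct a positive-probability path from $x$ realizing $\eta_n^{n+k}=\xi$ in three segments: (i) generate $v_x^*$; (ii) drive the $K$-suffix from $u^*$ to $\xi_1^K$ in $n+K-1-|v_x^*|$ steps via $\underline M_K$-feasible transitions (possible by Lemma \ref{rowirredlem} for $n$ large), each of which is $\overline M_K$-feasible and hence $g$-positive from the clean $\mathcal P_K$-past by the previous step; (iii) follow the $\overline M_K$-feasible continuation $\xi$, again with every transition $g$-positive. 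Conversely, $g_n(x,\xi)>0$ forces each transition to satisfy $\overline M_K=1$, and Lemma \ref{lemlac} gives $\xi_1^K\in\mathcal P_K$. Therefore the support of $P^x|_{\mathcal F_{[n,n+k]}}$ coincides with the $x$-independent set of $\overline M_K$-feasible $\xi\in A^{k+1}$ with $\xi_1^K\in\mathcal P_K$, and symmetrically for $y$ with a threshold $N_y$; taking $n\geq\max(N_x,N_y)$ concludes. The main obstacle is the second step — the precise $\overline M_K$ characterization on clean $\mathcal P_K$-pasts — which must simultaneously control the three strata $\mathcal D^{(K)},\mathcal D^{(<\infty)}\setminus\mathcal D^{(K)},\mathcal D^{(\infty)}$, exploiting the substring property of $v_x$ from \eqref{ass3}, assumption \eqref{ass2}, and the closedness of $\mathcal P_K$.
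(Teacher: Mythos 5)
Your proposal is correct and follows essentially the same route as the paper: both identify the support of $P^u|_{\F_{[n,n+k]}}$ with the $u$-independent set of $\overline M_K$-feasible strings whose first $K$-window lies in $\mathcal P_K$, using the ``clean'' string from \eqref{ass3} to neutralize $\mathcal D^{(\infty)}$, assumption \eqref{ass2} to exclude $\mathcal D^{(<\infty)}\setminus\mathcal D^{(K)}$ on $\mathcal P_K$-suffixes, and Lemmas \ref{rowirredlem}--\ref{lemlac} for reachability (the paper packages your explicit path construction as a boolean identity for the indicators $\delta_m$). The only superfluous step is your extension of $v_x$ to $v_x^*$ via Definition \ref{Deferi}(iii): since $x_{-K+1}^0\in\mathcal P_K$ and $g_1(x,v_x)>0$, Definition \ref{Deferi}(i) already forces $(xv_x)_{-K+1}^0\in\mathcal P_K$.
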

\begin{proof}
Fix $x,y\in \X^-$ such that $x_{-K+1}^0$ and $y_{-K+1}^0 \in \mathcal P_K$. By assumption \eqref{ass3} of Theorem \ref{theo1}, there exist $v_x$ and $v_y$ such that $g_{1}(x,v_x)$ and $g_{1}(y,v_y)$ are strictly positive and $v_x$, $v_y$  are not substrings of any elements of $\mathcal S_x$ and $\mathcal S_y$ respectively. Let $R:=|v_x|\vee |v_y|$. Now, thanks to the last affirmation, there exist two strings $w_x$ and $w_y$ of size $R$ which contain $v_x$ and $v_y$ respectively as substrings and such that $g_1(x,w_x)>0$ and $g_1(y,w_y)>0$. Observe that we necessarily have that $(xw_x)_{-K+1}^0, (yw_y)_{-K+1}^0\in \mathcal P_K$ since $x_{-K+1}^0, y_{-K+1}^0 \in \mathcal P_K$. By Lemma \ref{lemlac}, there exists $\hat{m}$ such that for all $m\geq \hat{m}$,  $g_{m}(xw_x,z_1^K)$ and  $g_{m}(yw_y,z_1^K)$ are both strictly positive for all $z_1^K\in \mathcal{P}_K$ and $g_{m}(xw_x,z_1^K)= g_{m}(yw_y,z_1^K)=0$, for $z_1^K\notin \mathcal{P}_K$. Again, by Lemma \ref{lemlac}, there exists $\tilde{m}$ such that for all $m\geq \tilde{m}$, $g_m(x,z_1^K)$ and  $g_m(y,z_1^K)$ are both strictly positive for all $z_1^K\in \mathcal{P}_K$ and $g_m(x,z_1^K)= g_m(y,z_1^K)=0$, for $z_1^K\notin \mathcal{P}_K$. In the rest of the proof we take $m=\hat{m}\vee(\tilde{m}+ R)$. 

Now, consider $g_m(u, z_1^{K+k})$
for $u\in \{x,y\}$ and $z_1^{K+k}\in A^{K+k}$, $k\geq 1$. We will show that $g_m(x, z_1^{K+k})>0$ if and only if $g_m(y, z_1^{K+k})>0$ to prove the proposition.

First, observe that if $z_1^K\notin \mathcal P_K$, $g_m(x, z_1^{K+k})=g_m(y, z_1^{K+k})=0,k\ge1$.
On the other hand, observe that for $u\in \{x,y\}$ and $z_1^{K+k}\in A^{K+k}$, $k\geq 1$,
\begin{align*}
	g_m(u, z_1^{K+k})=g_1(u,w_u)g_{m-R}(uw_u,z_1^{K+k})
	+\sum_{w\neq w_u}g_1(u,w)g_{m-R}(uw,z_1^{K+k}).
\end{align*}
Since we are only interested in the positivity or nullness of $g_m(u, z_1^{K+k})$, we define
\[
\delta_n(u,v)=\left\{
\begin{array}{ll}
	1, &\mbox{if}\phantom{*}g_n(u,v)>0,\\
	0, &\mbox{ otherwise}.
\end{array}
\right.
\]
Thus, if $z_1^K\in \mathcal P_K$, we have by definition of $\overline{M}_K$ and assumption \eqref{ass2},
\begin{align*}
	\delta_m(u, z_1^{K+k})&=\delta_1(u,w_u)\delta_{m-R}(uw_u,z_1^{K+k})
	+\sum_{w\neq w_u}\delta_1(u,w)\delta_{m-R}(uw,z_1^{K+k})\nonumber\\
	&=\delta_{m-R}(uw_u,z_1^{K+k})
	+\sum_{w\neq w_u}\delta_1(u,w)\delta_{m-R}(uw,z_1^{K+k})\nonumber\\
	&=\delta_{m-R}(uw_u,z_1^K)\prod_{j=1}^{k}\overline{M}_K(z_j^{K+j-1},z_{j+1}^{K+j})\nonumber\\
	&\phantom{****}+\sum_{w\neq w_u}\delta_1(u,w)\delta_{m-R}(uw,z_1^{K+k})\nonumber\\
	&=\prod_{j=1}^{k}\overline{M}_K(z_j^{K+j-1},z_{j+1}^{K+j})+\sum_{w\neq w_u}\delta_1(u,w)\delta_{m-R}(uw,z_1^{K+k}).
\end{align*}
If $\overline{M}_K(z_j^{K+j-1},z_{j+1}^{K+j})=1$ for all $j\in\{1,\dots, k\}$, then $\delta_m(u, z_1^{K+k})=1$ for $u\in \{x,y\}$. Now if $\overline{M}_K(z_j^{K+j-1},z_{j+1}^{K+j})=0$, for some $j\in\{1,\dots, k\}$, it remains to show that $\delta_m(u, z_1^{K+k})=0$ for $u\in \{x,y\}$.
To this end, observe that by definition of $\overline{M}_K$ for all $w$, we have
\begin{equation*}
	\delta_{m-R}(uw,z_1^{K+k})\leq \delta_{m-R}(uw,z_1^K)\prod_{j=1}^{k}\overline{M}_K(z_j^{K+j-1},z_{j+1}^{K+j}).
\end{equation*}
Therefore, if $\overline{M}_K(z_j^{K+j-1},z_{j+1}^{K+j})=0$ for some $j\in\{1,\dots, k\}$ we obtain $\delta_m(u, z_1^{K+k})=0$ for $u\in\{x,y\}$.
By the former analysis, we obtain that $P^{x}|_{\F_{[m,m+k]}} \sim P^{y}|_{\F_{[m,m+k]}}$ for all $k\geq 1$.
\end{proof}
\begin{lemma}
\label{lem3}
Let $x\in \X$ and $B\in \F_{[1,\infty)}$. Then, we have that for all $n\geq 1$ and any $y_1^n\in A^n$
\begin{equation*}
P^x[\sigma^{-n}B \mid \eta_1^n=y_1^n]=P^{xy_1^n}[B].
\end{equation*}
\end{lemma}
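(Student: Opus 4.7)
The plan is to reduce the statement to cylinder sets via the monotone class theorem and then verify it there by a direct computation using the compatibility of $P^{\mu}$ with the kernel $g$. The collection
\[
\mathcal C = \{\{\eta_1^k = z_1^k\} : k\geq 1,\; z_1^k \in A^k\}
\]
is a $\pi$-system generating $\F_{[1,\infty)}$. On the other hand, the family $\mathcal L$ of sets $B\in \F_{[1,\infty)}$ for which the claimed identity holds contains $\mathcal X$, is closed under complements (for $y_1^n$ with $P^x[\eta_1^n=y_1^n]>0$, both sides are probabilities), and is closed under countable disjoint unions by $\sigma$-additivity applied to both $P^x[\,\cdot\mid \eta_1^n=y_1^n]$ and $P^{xy_1^n}$. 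Hence $\mathcal L$ is a $\lambda$-system, and by Dynkin's theorem it suffices to check the identity on $\mathcal C$.

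For $B = \{\eta_1^k = z_1^k\}$, note $\sigma^{-n}B = \{\eta_{n+1}^{n+k}=z_1^k\}$. Iterating the defining compatibility relation $P^x[\eta_{j+1}=a\mid \eta_{-\infty}^j=\cdot]=g(\cdot\, a)$ gives, by telescoping,
\[
P^x[\eta_1^{n+k}=y_1^n z_1^k] = \prod_{i=1}^n g(xy_1^{i-1}y_i)\cdot \prod_{j=1}^{k} g(xy_1^n z_1^{j-1} z_j),
\]
and in particular $P^x[\eta_1^n=y_1^n] = \prod_{i=1}^n g(xy_1^{i-1}y_i)$. For any $y_1^n$ such that the latter is strictly positive, taking the ratio yields
\[
P^x[\eta_{n+1}^{n+k}=z_1^k \mid \eta_1^n=y_1^n] = \prod_{j=1}^{k} g(xy_1^n z_1^{j-1} z_j).
\]
Applying the same telescoping to $P^{xy_1^n}$ (whose past is $xy_1^n$) produces exactly the same product for $P^{xy_1^n}[\eta_1^k=z_1^k]$, establishing equality on $\mathcal C$ and hence on all of $\F_{[1,\infty)}$.

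There is no substantive obstacle here; the lemma is essentially a Markov-type identity, and the only thing requiring care is handling the $y_1^n$ with $P^x[\eta_1^n=y_1^n]=0$, on which the conditional probability is only defined up to a null set. Since we condition on atoms, one simply fixes a regular version and notes that the equality is asserted for those $y_1^n$ where both sides make sense; on the null atoms there is nothing to prove. The remainder is a bookkeeping exercise on finite products of the kernel.
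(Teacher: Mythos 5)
Your proof is correct and takes essentially the same approach as the paper's: reduce to cylinder sets by a monotone-class (Dynkin) argument and verify the identity there using the compatibility of $P^x$ with $g$. The paper merely asserts the cylinder-level equality, whereas you spell out the telescoping product and the handling of null atoms, which is a harmless elaboration of the same argument.
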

\begin{proof}
By standard arguments of measure theory, it is enough to prove the result for a cylinder set $C=\{\eta_k^l=x_k^l\}$, for $1\leq k\leq l$.
Observe that $\sigma^{-n}C=\{\eta_{k+n}^{l+n}=x_k^l\}$. Therefore, we obtain
\begin{align*}
P^x[\sigma^{-n}C\mid \eta_1^n=y_1^n]&=P^x[\eta_{k+n}^{l+n}=x_k^l\mid \eta_1^n=y_1^n]\\
&=P^{xy_1^n}[\eta_{k}^{l}=x_k^l]\\
&=P^{xy_1^n}[C],
\end{align*}
which concludes the proof.
\end{proof}
In the next proposition, we denote by $\mathcal I$ the $\sigma$-algebra of invariant events under the shift $\sigma$.
\begin{lemma}
\label{lem4}
	Let $x\in \X_-$ such that $x_{-K+1}^0\notin \mathcal P_K$. Under assumption \eqref{ass1} of Theorem~\ref{theo1} we have for all $B\in \mathcal I$,
\begin{equation*}
P^x[B]=\sum_{n=1}^{\infty}\sum_{y_1^n\in \mathcal{E}_x^n}P^{xy_1^n}[B]P^x[\eta_1^n=y_1^n],
\end{equation*}
where $\mathcal{E}_x^n:=\{y_1^n\in A^n:(xy_1^n)_{-K+1}^0\in \mathcal P_K\; \text{and}\; (xy_1^j)_{-K+1}^0\notin \mathcal P_K\; \text{for}\; 1\leq j<n \}$.
\end{lemma}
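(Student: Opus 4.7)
The plan is to partition the sample space according to the first hitting time of $\mathcal P_K$ and then combine the invariance of $B$ with the Markov-like property of $P^x$ established in Lemma \ref{lem3}.

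First I observe that $\mathcal E_x^n$ is exactly the collection of sample paths along which $T_{\mathcal P_K}=n$ when the chain starts from $x$. Consequently the events $\{\eta_1^n=y_1^n\}$, for $y_1^n\in \mathcal E_x^n$ and $n\ge 1$, form a countable disjoint partition of $\{T_{\mathcal P_K}<\infty\}$. Because $x_{-K+1}^0\notin \mathcal P_K$, assumption \eqref{ass1} of Theorem \ref{theo1}, specifically item (iii) of Definition \ref{Deferi}, ensures $P^x[T_{\mathcal P_K}<\infty]=1$, so
\[
P^x[B]=\sum_{n=1}^{\infty}\sum_{y_1^n\in\mathcal E_x^n}P^x[B\cap\{\eta_1^n=y_1^n\}].
\]

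Next, for each fixed $n$ and $y_1^n\in \mathcal E_x^n$ with $P^x[\eta_1^n=y_1^n]>0$, I would exploit invariance in the form $B=\sigma^{-n}B$ to rewrite $P^x[B\cap\{\eta_1^n=y_1^n\}]=P^x[\sigma^{-n}B\cap\{\eta_1^n=y_1^n\}]$, and then apply Lemma \ref{lem3} to obtain $P^x[\sigma^{-n}B\mid \eta_1^n=y_1^n]=P^{xy_1^n}[B]$. Multiplying by $P^x[\eta_1^n=y_1^n]$ and summing over $n$ and $y_1^n$ yields the announced identity.

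The one point deserving care is that Lemma \ref{lem3} is stated for $B\in \F_{[1,\infty)}$, whereas an invariant event may a priori depend on past coordinates as well. This is not a real obstacle: under $P^x$ the past equals $x$ almost surely, so conditionally on $\eta_1^n=y_1^n$ the shifted sequence $\sigma^n\omega$ has distribution $P^{xy_1^n}$ by the very definition of $P^x$ as a measure compatible with $g$. Combined with the pointwise identity $\mathbf{1}_B(\omega)=\mathbf{1}_B(\sigma^n\omega)$ coming from shift-invariance, this extends Lemma \ref{lem3} to invariant events with no change in the argument. That small extension is the only non-bookkeeping step in the proof.
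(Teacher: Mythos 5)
Your proof is correct and follows essentially the same route as the paper: decompose over the $P^x$-a.s.\ finite hitting time $T_{\mathcal P_K}$ (guaranteed by Definition~\ref{Deferi}(iii)), use $B=\sigma^{-n}B$, and invoke Lemma~\ref{lem3}. The only difference is that you explicitly address the fact that Lemma~\ref{lem3} is stated for $B\in\F_{[1,\infty)}$ while an invariant event lives in $\F$ --- a point the paper passes over silently --- and your resolution (the past is $P^x$-a.s.\ deterministic, so the conditional law of $\sigma^n\omega$ given $\eta_1^n=y_1^n$ is exactly $P^{xy_1^n}$) is valid.
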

\begin{remark}
It may happen that $\mathcal{E}_x^n=\emptyset$ for some $n\geq 1$. In this case, we use the convention that 
$\sum_{y_1^n\in \mathcal{E}_x^n}=0$.
\end{remark}
\begin{proof} We have by Definition \ref{Deferi} (iii), 
\begin{align*}
	P^x[B]&=\sum_{n=1}^{\infty} P^x[B,T_{\mathcal P_K}=n]\\
	&=\sum_{n=1}^{\infty}\sum_{y_1^n\in A^n} P^x[B,\eta_1^n=y_1^n,T_{\mathcal P_K}=n]\\
	&=\sum_{n=1}^{\infty}\sum_{y_1^n\in A^n} P^x[\sigma^{-n}B,\eta_1^n=y_1^n,T_{\mathcal P_K}=n]\\
	&=\sum_{n=1}^{\infty}\sum_{y_1^n\in \mathcal{E}_x^n} P^x[\sigma^{-n}B\mid \eta_1^n=y_1^n]P^x[\eta_1^n=y_1^n]\\
	&=\sum_{n=1}^{\infty}\sum_{y_1^n \in \mathcal{E}_x^n}P^{xy_1^n}[B]P^x[\eta_1^n=y_1^n],
\end{align*}
where in the third line we used the fact that $B=\sigma^{-n}B$ (since $B\in \mathcal I$), and Lemma~\ref{lem3} to obtain the last equality.
\end{proof}
\medskip

\noindent
{\it Proof of Theorem \ref{theo1}.}
\medskip

\noindent
We first prove that for any  $x,y\in \mathcal{X}^-$ such that $x_{-K+1}^0$ and $y_{-K+1}^0 \in \mathcal P_K$, we have  $P^x|_{\mathcal I} \sim P^y|_{\mathcal I}$. For this, it is enough to show that there exists $n\geq 1$ such that $P^x|_{\mathcal{F}_{[n,\infty)}} \sim P^y|_{\mathcal{F}_{[n,\infty)}}$. By symmetry, it will be enough to prove that $P^x|_{\mathcal{F}_{[n,\infty)}}\ll P^y|_{\mathcal{F}_{[n,\infty)}}$. 
To obtain the absolute continuity, we will use a result from the theory of predictable absolute continuity and singularity (ACS) criterion developed by Jacod and Shiryaev \citep{jacod/shiryaev/2002}.

 Fix $n\geq 1$. For $w \in \X$, let $Z_k(w) := \frac{dP^x|_{\F_{[n,n+k]}}}{dP^y|_{\F_{[n,n+k]}}}(w)$ and $\alpha_k(w) := Z_k(w)/Z_{k-1}(w)$, $k\geq 2$. Also, define for $k\geq 2$,
\begin{equation*}
 d_k(w) := E_{P^y}\big[(1-\sqrt{\alpha_k})^2 | \F_{[n, n+k-1]}\big](w). 
\end{equation*}
The predictable ACS criterion is given by
\begin{theo*}[see \cite{jacod/shiryaev/2002}, Theorem 2.36, p.253]
If for all $k \geq 1$ we have $P^x |_{\F_{[n,n+k]}} \ll P^y |_{\F_{[n,n+k]}}$, then $P^x|_{\F_{[n,\infty)}} \ll P^y|_{\F_{[n,\infty)}}$ if and only if $\sum_{k=2}^\infty d_k(w) < \infty$, $P^x$-a.s.
\end{theo*}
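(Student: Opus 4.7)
The plan is to prove this Jacod--Shiryaev ACS criterion by combining martingale convergence with a Kakutani-type computation on the square-root density process. By hypothesis, $Z_k := dP^x|_{\F_{[n,n+k]}}/dP^y|_{\F_{[n,n+k]}}$ is a well-defined non-negative $P^y$-martingale with $E_{P^y}[Z_k]=1$, so it converges $P^y$-a.s.\ to a limit $Z_\infty$, and the Lebesgue decomposition of $P^x|_{\F_{[n,\infty)}}$ with respect to $P^y|_{\F_{[n,\infty)}}$ has $Z_\infty$ as its absolutely continuous density. Consequently $P^x|_{\F_{[n,\infty)}}\ll P^y|_{\F_{[n,\infty)}}$ is equivalent to $E_{P^y}[Z_\infty]=1$, that is, to uniform integrability of $(Z_k)$ under $P^y$.

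The core step is to convert this UI condition into the series condition on $(d_k)$. Since $E_{P^y}[\alpha_k\mid\F_{[n,n+k-1]}]=1$ on $\{Z_{k-1}>0\}$, expanding the square in the definition of $d_k$ yields $E_{P^y}[\sqrt{\alpha_k}\mid\F_{[n,n+k-1]}]=1-d_k/2$ on the same set. Setting
\begin{equation*}
M_k := \sqrt{Z_k}\,\Big/\,\prod_{j=2}^{k}\bigl(1-d_j/2\bigr),
\end{equation*}
with the process stopped at $\tau:=\inf\{j:Z_j=0\}$, produces a non-negative $P^y$-martingale, and hence the factorization $\sqrt{Z_k}=M_k\prod_{j=2}^{k}(1-d_j/2)$. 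A standard identity $|Z_k-Z_\infty|=|\sqrt{Z_k}-\sqrt{Z_\infty}|(\sqrt{Z_k}+\sqrt{Z_\infty})$ combined with Cauchy--Schwarz shows that $E_{P^y}[Z_\infty]=1$ iff $\sqrt{Z_k}\to\sqrt{Z_\infty}$ in $L^2(P^y)$, which via the factorization reduces to strict positivity of $\prod_j(1-d_j/2)$ along $P^y$-typical paths together with $L^2$-boundedness of $M_k$; a Taylor expansion $\log(1-d_j/2)\sim -d_j/2$ then turns positivity of this infinite product into $\sum_j d_j<\infty$.

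Transferring the a.s.\ statement from $P^y$ to $P^x$ uses the change-of-measure identity $E_{P^y}[Z_{k-1}f]=E_{P^x}[f]$ for non-negative $\F_{[n,n+k-1]}$-measurable $f$, together with the tail-measurability of $\{\sum_k d_k<\infty\}$: this identifies the event, modulo $P^y$-null sets, with the support of the absolutely continuous part of $P^x$ with respect to $P^y$, so $P^x$-a.s.\ finiteness of $\sum d_k$ is the correct one-sided criterion while under $P^y$ the complementary event can carry positive mass (reflecting that $P^x$ may be singular on a $P^y$-positive set). The main technical obstacle will be the careful bookkeeping on $\{Z_{k-1}=0\}$, where $\alpha_k$ is not intrinsically defined: one must introduce the predictable stopping time $\tau$, verify that both the factorization of $\sqrt{Z_k}$ and the martingale property of $M_k$ survive the stopping, and exploit the asymmetry that $\{\tau<\infty\}$ can have positive $P^y$-mass but is $P^x$-null. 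Once this is done, the remainder of the argument is Kakutani's product-measure dichotomy upgraded to the filtered setting, with the Hellinger process $h_k:=\sum_{j\le k}d_j$ playing the role of the Hellinger integral.
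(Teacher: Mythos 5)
First, a point of comparison: the paper does not prove this statement at all. It is quoted verbatim as Theorem 2.36, p.~253 of \cite{jacod/shiryaev/2002} and used as a black box inside the proof of Theorem \ref{theo1}. So there is no in-paper argument to measure your proposal against; what follows assesses it as a self-contained attempt to reprove the cited result. Your overall architecture is the standard one for the discrete-time ACS criterion, and several pieces are correct: $Z_k$ is a non-negative $P^y$-martingale, absolute continuity on $\F_{[n,\infty)}$ is equivalent to $E_{P^y}[Z_\infty]=1$, i.e.\ to uniform integrability of $(Z_k)$, the identity $E_{P^y}[\sqrt{\alpha_k}\mid\F_{[n,n+k-1]}]=1-d_k/2$ holds on $\{Z_{k-1}>0\}$, and $M_k=\sqrt{Z_k}/\prod_{j\le k}(1-d_j/2)$ is (after stopping) a non-negative $P^y$-martingale.

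The genuine gap is in your central reduction. You assert that $L^2(P^y)$-convergence of $\sqrt{Z_k}$ ``reduces to strict positivity of $\prod_j(1-d_j/2)$ along $P^y$-typical paths together with $L^2$-boundedness of $M_k$,'' and neither half survives scrutiny. $M_k$ is in general not $L^2$-bounded: $E_{P^y}[M_k^2]=E_{P^y}\big[Z_k\prod_{j\le k}(1-d_j/2)^{-2}\big]$ can diverge. Worse, $P^y$-a.s.\ positivity of the product is not what the theorem asserts; the criterion is $\sum_k d_k<\infty$ \emph{$P^x$-a.s.}, and under $P^y$ the sum may diverge on a set of positive measure while $P^x\ll P^y$ still holds (that set simply carries $Z_\infty=0$). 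As written, your middle step targets a different and strictly stronger condition, closer to a criterion for mutual equivalence. The missing idea is localization by the \emph{predictable} stopping times $\sigma_a=\inf\{k:\sum_{j\le k+1}d_j>a\}$ (predictable because $d_{k+1}$ is $\F_{[n,n+k]}$-measurable): the stopped martingale $M_{\cdot\wedge\sigma_a}$ is $L^2$-bounded (after discarding the finitely many indices where $d_j$ is close to $2$), which gives uniform integrability of $Z_{\cdot\wedge\sigma_a}$ and hence $P^x(A)=E_{P^y}[Z_\infty\mathbf{1}_A]$ for $A\subset\{\sigma_a=\infty\}$; letting $a\to\infty$ exhausts $\{\sum_k d_k<\infty\}$ and yields the ``if'' direction, while on $\{\sum_k d_k=\infty\}$ the factorization forces $Z_\infty=0$, so the absolutely continuous part of $P^x$ puts no mass there and the ``only if'' direction follows. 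Your closing identification of $\{\sum_k d_k<\infty\}$ with the support $\{Z_\infty>0\}$ of the absolutely continuous part is also false in general (only one inclusion holds, since $M_\infty$ may vanish), and the bookkeeping at $\tau=\inf\{j:Z_j=0\}$ that you single out as the main obstacle is in fact the easy part. Either carry out the localization in full, or do what the authors do and import the theorem as a citation.
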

By Proposition \ref{proplac1}, there exists $n\geq 1$ such that for all $k\geq1$, $P^x|_{\F_{[n,n+k]}} \ll P^y|_{\F_{[n,n+k]}}$. Now, let us rewrite $d_k$ in a more explicit form. We have that
\begin{equation*}
 \frac{dP^{x}|_{\F_{[n,n+k]}}}{dP^x|_{\F_{[n,n+k-1]}}}(w) =P^x[\eta_n =w_{n+k} |\eta^{n+k-1}_1=w^{n+k-1}_1]=: P^x(w_{n+k} | w^{n+k-1}_1).
\end{equation*}

Hence, for all $k\geq 2$
\begin{align*}
d_k(w) &= \sum_{w_{n+k} \in A} \left(1-\sqrt{\frac{P^x(w_{n+k} | w^{n+k-1}_1)}{P^y(w_{n+k} | w^{n+k-1}_1)}}\right)^2 P^y(w_{n+k} | w^{n+k-1}_1)\\
 &= \sum_{w_{n+k} \in A} \left(\sqrt{P^x(w_{n+k} | w^{n+k-1}_1)}-\sqrt{P^y(w_{n+k} |w^{n+k-1}_1)}\right) ^2\\
&=\sum_{w_{n+k} \in A} \left(\sqrt{ g(x w^{n+k}_1)}-\sqrt{g(yw^{n+k}_1)}\right) ^2
\end{align*}
where in the first line we use the convention $\frac{0}{0}=0$. Since $\sqrt{g}\in \mathcal{W}^{2}$, we obtain that $\sum_{k\geq 2}d_k(w)<\infty$ for $P^x$-a.e.~$w\in \X$.
Thus, we can apply the ACS criterion and obtain that $P^x|_{\mathcal{F}_{[n,\infty)}} \ll P^y|_{\mathcal{F}_{[n,\infty)}}$. By symmetry, we immediately obtain $P^x|_{\mathcal{F}_{[n,\infty)}} \sim P^y|_{\mathcal{F}_{[n,\infty)}}$ and therefore $P^x|_{\mathcal I} \sim P^y|_{\mathcal I}$ for any  $x,y\in \mathcal{X}^-$ such that $x_{-K+1}^0$ and $y_{-K+1}^0 \in \mathcal P_K$.

Now, consider $z\in \X_-$ such that $z_{-K+1}^0\notin \mathcal P_K$. We will show that $P^z|_{\mathcal I} \sim P^x|_{\mathcal I}$ for $x\in \mathcal{X}^-$ such that $x_{-K+1}^0\in \mathcal P_K$. Using Lemma \ref{lem4}, we have for all $B\in \mathcal I$
\[
P^z[B]=\sum_{n=1}^{\infty}\sum_{y_1^n\in \mathcal{E}_x^n}P^{zy_1^n}[B]P^z[\eta_1^n=y_1^n].
\]
From this last expression, we deduce that if $P^z[B]>0$ then there exists $n\geq 1$ and $y_1^n\in \mathcal{E}_x^n$ such that $P^{zy_1^n}[B]>0$. Since $(zy_1^n)_{-K+1}^0\in \mathcal P_K$, we have that $P^{zy_1^n}|_{\mathcal{I}}\sim P^x|_{\mathcal I}$ and thus $P^x[B]>0$. Similarly, if $P^x[B]>0$, for all $n\geq 1$ and $y_1^n\in  \mathcal{E}_x^n$ we have $P^{zy_1^n}[B]>0$ and thus $P^z[B]>0$. 

The first part of the proof shows that for all $x,y\in \X_-$, $P^x|_{\mathcal{I}}\sim P^y|_{\mathcal{I}}$. Suppose that there exists two stationary probability measures $P^\mu$ and $P^\nu$ compatible with~$g$. By the Ergodic Decomposition Theorem (cf.~Appendix),  we can assume that  $P^\mu$ and $P^\nu$  are ergodic. Then, as $ P^\mu\neq P^\nu$, there exists an invariant set $I$ such that $P^\mu[I]=1-P^\nu[I]=1$.
 Since $P^\mu[I]=\int P^x[I]\mu(dx)$, we have that $P^x[I]=1$, $\mu(dx)$-a.s. In the same way, we have that $P^y[I]=0$, $\nu(dy)$-a.s. Therefore, we can pick up $x$ such that $P^x[I]=1$ and $y$ such that $P^y[I]=0$. But this contradict the fact that $P^x|_{\mathcal I} \sim P^y|_{\mathcal I}$ for all $x, y \in \X_-$. 
Therefore there exists at most one stationary measure $\mu$ compatible with $g$. 

Finally, let us show that $\mu$ is weak Bernoulli when it exists. 
To this end, we use Corollary 2.8 in \cite{tong/vanrandel/2014}. In our notation, it states that $\mu$ is weak Bernoulli if and only if for $\mu|_{\F^-} \otimes \mu|_{\F^-}$-a.e.~$(x,y)$, there exist a $k_0 \geq 0$ such that $P^{x}$ and $P^{y}$ are not mutually singular on $\F_{[k_0,\infty)}$. By Proposition \ref{proplac1},  this is the case for all  $x, y\in \X_-$ such that $x_{-K+1}^0$ and $y_{-K+1}^0 \in \mathcal P_K$. Therefore, it remains to show that $\mu[\eta_{-K+1}^{0}=x_{-K+1}^0]=0$  for $x_{-K+1}^0\notin \mathcal{P}_K$. Observe that for all $n\geq 1$,
\begin{align*}
\mu[\eta_{-K+1}^0=x_{-K+1}^0]&=\mu[\eta_{n-K+1}^n=x_{-K+1}^0]\\
&=\mu[\eta_{n-K+1}^n=x_{-K+1}^0, T_{{\mathcal P}_K}>n]\\
&\leq \mu[T_{{\mathcal P}_K}>n].
\end{align*}
By assumption \eqref{ass1}, this last term tends to 0 as $n\to \infty$.
\qed

\begin{proof}[Proof of Corollary \ref{coro2}]
We only have to show that $d_k$ from the ACS criterion can be bounded by 
$$\sum_{w_{n+k} \in A} \left(g(x w^{n+k}_1)-g(yw^{n+k}_1)\right) ^2$$
instead of 
$$\sum_{w_{n+k} \in A} \left(\sqrt{ g(x w^{n+k}_1)}-\sqrt{g(yw^{n+k}_1)}\right) ^2.$$

Let us first define $\gamma:=\inf_x\{g(x):g(x)>0\}$ and observe that $\gamma>0$ since $g$ is continuous and $\{g>0\}$ is compact (this last fact follows from the equality $\{g>0\}=(\mathcal{D}^{(<\infty)})^c$ and the property that $\mathcal{D}^{(<\infty)}$ is open in the product topology). Now, consider the $K$ used in the assumptions of Theorem \ref{theo1}. We have, for $k\geq K$,
\begin{align*}
d_k(w)&=\sum_{w_{n+k} \in A}\left(\sqrt{g(xw^{n+k}_1)}-\sqrt{g(yw^{n+k}_1)}\right) ^2\nonumber\\
&\leq \frac{1}{4\gamma}\sum_{w_{n+k} \in A}\left((g(xw^{n+k}_1)-g(yw^{n+k}_1)\right)^2.
\end{align*}
 By hypothesis $g\in \mathcal{W}^2$, therefore we have that $\sum_{k\geq 2}d_k(w)<\infty$, for $P^x$-a.e.~$w$. Finally, when $A$ is finite, the continuity of $g$ guarantees the existence of a stationary probability measure compatible with $g$, see for example \cite{keane/1972}.
\end{proof}

\section{Appendix}
In this section, for completeness, we state and prove the Ergodic Decomposition Theorem for invariant probability measures compatible with some kernel $g$. This result is standard in statistical mechanics (see for example \cite{georgii2011gibbs}), but we could not find a published proof of this result in the context of $g$-measures. 

We first recall some definitions from \cite{fernandez/maillard/2005}. 
We denote by $\mathcal{Z}_b$ the set of finite intervals contained in $\Z$. For $\Lambda \in \mathcal{Z}_b$, let $l_\Lamb:=\min\Lambda$ and $m_{\Lamb}:=\max\Lamb$.
\begin{defi}
A left interval-specification (L.I.S.) on $(\X, \F)$ is a family of probability kernels $f=(f_{\Lamb})_{\Lamb\in \mathcal{Z}_b }$, $f_{\Lamb}:\F_{(-\infty,\;m_{\Lamb}]}\times \X\to [0,1]$ such that for all $\Lambda\in \mathcal{Z}_b$, 
\begin{itemize}
\item[(i)] For $B\in \F_{(-\infty , m_{\Lamb}]}$, $f_\Lamb(B\mid \cdot)$ is $\F_{(-\infty,\; l_{\Lamb})}$-measurable; 
\item[(ii)] For $C\in \F_{(-\infty , \; l_{\Lamb})}$, $f_\Lamb(C\mid \cdot)={\bf 1}_C(\cdot)$;
\item[(iii)] For $\Delta \in \mathcal{Z}_b$, $\Delta \supset \Lamb$, $f_{\Delta}f_{\Lamb}=f_\Delta$, on $\F_{(-\infty ,\; m_{\Lamb}]}$. (Here $f_{\Delta}f_{\Lamb}$ is the usual composition of kernels)

\end{itemize}
\end{defi}

\begin{defi}
A probability measure on $(\X, \F)$ is compatible (or consistent) with the L.I.S. $f=(f_{\Lamb})_{\Lamb\in \mathcal{Z}_b }$ if for each $\Lambda\in \mathcal{Z}_b $,
\[
\mu f_\Lamb=\mu, \;\;\; {\text{on}}\;\; \F_{(-\infty ,\; m_\Lamb]}.
\]
\end{defi}
A L.I.S.~$f=(f_{\Lamb})_{\Lamb\in \mathcal{Z}_b }$  is {\it shift-invariant} if
$$
f_{\Lambda-1}(\sigma  B\mid \sigma  w)=f_{\Lambda}(B\mid w)
$$
for all $B\in \F_{(-\infty,m_\Lamb]}$, $\Lamb\in \mathcal{Z}_b$ and $w\in \X$.


Let us also define
\[
\mathcal{G}(f)=\{\mu: \mu f_{\Lambda}(B)=\mu(B), \forall B\in \F_{(-\infty,\;m_{\Lambda}]}, \forall \Lambda \in \mathcal{Z}_b \},
\] 
the set of probability measures compatible with $f$ and 
\[
\mathcal{G}_\theta(f):=\mathcal{P}_{\theta}(\X, \F) \cap \mathcal{G}(f)
\]
where  $\mathcal{P}_{\theta}(\X, \F)$ is the set of stationary probability measures on $(\X, \F)$.
\medskip

\noindent
Denoting by $\text{ex}(\mathcal{G}_{\theta}(f))$ the set of extremal points of $\mathcal{G}_{\theta}(f)$ we have the following
\begin{theo*}[Ergodic Decomposition Theorem]
Suppose that $(A, \B(A))$ is standard Borel and $f$ is shift-invariant. If $\mathcal{G}_\theta(f)\neq \emptyset$, then $\emph{ex}(\mathcal{G}_{\theta}(f))\neq \emptyset$ and, for each $\mu\in \mathcal{G}_\theta(f)$, there exists a unique probability measure $w_{\mu}$ on $\emph{ex}(\mathcal{G}_{\theta}(f))$ such that
\begin{equation}
\label{Choquetdecomp}
\mu=\int_{\emph{ex}(\mathcal{G}_\theta(f))}\nu \;w_{\mu}(d\nu).
\end{equation}
Moreover, $\emph{ex}(\mathcal{G}_{\theta}(f))$ coincides with the set of ergodic probability measures compatible with $f$.
\end{theo*}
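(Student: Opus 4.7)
The plan is to reduce to the classical ergodic decomposition of stationary measures on the Polish space $\X$, and then show that the compatibility class $\mathcal G_\theta(f)$ is stable under this decomposition. Since $(A, \mathcal B(A))$ is standard Borel, so is $(\X, \F)$; letting $\mathcal I$ denote the $\sigma$-algebra of shift-invariant events, the Kryloff--Varadarajan disintegration gives a regular conditional kernel $\mu_\omega(\cdot) := \mu(\cdot \mid \mathcal I)(\omega)$ with $\mu = \int \mu_\omega \, \mu(d\omega)$, and for $\mu$-a.e.\ $\omega$ the measure $\mu_\omega$ is stationary and ergodic. I would take $w_\mu$ to be the pushforward of $\mu$ under $\omega \mapsto \mu_\omega$.

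The central step is to show $\mu_\omega \in \mathcal G(f)$ for $\mu$-a.e.\ $\omega$, which combined with ergodicity places $\mu_\omega$ in $\mathrm{ex}(\mathcal G_\theta(f))$. Since $A$ is standard Borel, each $\F_{(-\infty, m_\Lambda]}$ is countably generated, so it suffices to prove, for every $\Lambda \in \mathcal Z_b$ and every $B$ in a countable generating algebra of $\F_{(-\infty, m_\Lambda]}$, that $\mu_\omega(B) = \int f_\Lambda(B \mid x) \, \mu_\omega(dx)$ for $\mu$-a.e.\ $\omega$. Both sides are $\mathcal I$-measurable functions of $\omega$, so this reduces to checking the integrated identity $\int_I f_\Lambda(B \mid \cdot) \, d\mu = \mu(I \cap B)$ for every $I \in \mathcal I$.

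To establish this identity I would exploit the joint shift-invariance of $\mu$ and $f$: approximate $I$ in $\mu$-measure by a cylinder event $I_\epsilon$, and then use $\sigma^{-n} I = I$ to replace $I_\epsilon$ by $\sigma^{-n} I_\epsilon$ with $n$ chosen so large that $\sigma^{-n} I_\epsilon \in \F_{(-\infty, l_\Lambda)}$. The factorization $f_\Lambda(C \cap B \mid \cdot) = \mathbf 1_C \, f_\Lambda(B \mid \cdot)$ for $C \in \F_{(-\infty, l_\Lambda)}$ (a direct consequence of property (ii)), combined with the compatibility $\mu f_\Lambda = \mu$ on $\F_{(-\infty, m_\Lambda]}$, yields $\mu(\sigma^{-n} I_\epsilon \cap B) = \int_{\sigma^{-n} I_\epsilon} f_\Lambda(B \mid \cdot) \, d\mu$; letting $\epsilon \to 0$ and using stationarity of $\mu$ gives the required identity. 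Balancing the shift scale $n$ against the locality of $\Lambda$ while preserving the compatibility structure is the main technical obstacle.

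Finally, I would verify the identification $\mathrm{ex}(\mathcal G_\theta(f)) = \{\textrm{ergodic elements of } \mathcal G_\theta(f)\}$. The direction \emph{ergodic $\Rightarrow$ extremal} is standard: any convex decomposition $\nu = t \nu_1 + (1-t) \nu_2$ within $\mathcal G_\theta(f)$ forces $d\nu_1/d\nu$ to be a bounded $\sigma$-invariant function, hence constant by ergodicity, so $\nu_1 = \nu_2 = \nu$. Conversely, if $\nu \in \mathrm{ex}(\mathcal G_\theta(f))$ were not ergodic, the decomposition constructed above would express $\nu$ as a nontrivial convex combination of ergodic elements of $\mathcal G_\theta(f)$, contradicting extremality. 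Uniqueness of $w_\mu$ is then an immediate consequence of the uniqueness in the classical ergodic decomposition combined with this identification.
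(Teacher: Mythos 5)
Your proposal is correct, and its overall architecture (classical ergodic decomposition on the standard Borel space $\X$, then showing that the a.e.\ ergodic components $\mu_\omega$ remain in $\mathcal G(f)$, then identifying $\mathrm{ex}(\mathcal G_\theta(f))$ with the ergodic compatible measures) is the same as the paper's, which follows Theorem 14.17 of Georgii. Where you genuinely diverge is in the crucial step $\mu_\omega\in\mathcal G(f)$ a.s. The paper handles it by observing that compatibility makes $f_\Lambda(B\mid\cdot)$ a version of $\mu(B\mid\F_{(-\infty,l_\Lambda)})$, invoking Lemma 6.5 of Fern\'andez--Maillard to identify this with $\mu(B\mid\mathcal T_{-\infty})$ where $\mathcal T_{-\infty}=\cap_n\F_{(-\infty,-n]}$, and then using the tower property together with the inclusion $\mathcal I\subset\mathcal T_{-\infty}$ (mod $\mu$, for stationary $\mu$) to conclude $\mu(f_\Lambda(B\mid\cdot)\mid\mathcal I)=\mu(B\mid\mathcal I)$ on a countable generating class. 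You instead prove the integrated identity $\int_I f_\Lambda(B\mid\cdot)\,d\mu=\mu(I\cap B)$ for $I\in\mathcal I$ directly, by approximating $I$ with a finite cylinder $I_\epsilon$, shifting it into the remote past (with the paper's convention $[\sigma(x)]_i=x_{i+1}$ this means applying $\sigma^{n}$ rather than $\sigma^{-n}$ to push the support below $l_\Lambda$ --- a sign detail only), and using property (ii) of the L.I.S.\ plus compatibility; the approximation error is controlled uniformly in $n$ by $\mu(I\,\triangle\,\sigma^{n}I_\epsilon)=\mu(I\,\triangle\,I_\epsilon)<\epsilon$, so the ``balancing'' you worry about is not actually an obstacle. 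Your route is more self-contained (it does not need the external lemma on tail-measurability of the conditional probabilities, nor the citation of Theorem 3.5(a) of Fern\'andez--Maillard for the extremal--ergodic equivalence, which you argue by hand), while the paper's route is shorter and isolates the structurally meaningful fact that compatible measures condition only on the remote past. Both arguments are sound.
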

\medskip

\noindent
{\it Proof.} The proof follows the lines of the proof of Theorem 14.17 
in \cite{georgii2011gibbs}. We first 
recall the definition of the key object $\pi$ from that proof. Following Definition (7.21) in \cite{georgii2011gibbs}, a probability kernel $\pi$ from $(\X, \mathcal{I})$ to $(\X, \F)$ is a $(\mathcal{G}_\theta(f),\mathcal{I})$-{\it kernel} if it satisfies
\begin{itemize}
    \item[(i)] for all $\mu\in \mathcal{G}_\theta(f)$ and $B\in \mathcal{F}$, $\pi^{\cdot}(B) = \mu(B\mid\mathcal{I})(\cdot)$, $\mu$-a.s.;
    \item[(ii)] $\{x\in \X: \pi^x(\cdot)\in \mathcal{G}_\theta(f)\}\in \mathcal{I}$;
    \item[(iii)] for all $\mu\in \mathcal{G}_\theta(f)$, $\mu(\{x\in \X: \pi^x(\cdot)\in \mathcal{G}_\theta(f)\})=1$.
\end{itemize}
By Proposition 7.22 in \cite{georgii2011gibbs} the decomposition (\ref{Choquetdecomp}) then follows from the existence of such a kernel once we observe that the elements of $\text{ex}(\mathcal{G}_{\theta}(f))$ are the ergodic probability measures compatible with $f$ (see Theorem 3.5 item (a) of \cite{fernandez/maillard/2005}). \\
As $(A,\mathcal{B}(A))$ is standard Borel, Theorem 14.10 in \cite{georgii2011gibbs} establishes the existence of a $(\mathcal{P}_{\theta}(\X, \F),\mathcal{I})$-kernel $\pi$ (the definition of a $(\mathcal{P}_{\theta}(\X, \F),\mathcal{I})$-kernel is analogous to that of a $(\mathcal{G}_\theta(f),\mathcal{I})$-kernel given above, one simply replaces $\mathcal{G}_\theta(f)$  by $\mathcal{P}_{\theta}(\X, \F)$ in the definition). Therefore, as $\mathcal{G}_\theta(f)=\mathcal{P}_{\theta}(\X, \F) \cap \mathcal{G}(f)$, it only remains to show that $\pi^\cdot \in \mathcal{G}(f)$, $\mu$-a.s., i.e.\ that the conditional distributions given $\mathcal{I}$ are themselves compatible with $f$. This last property does not actually depend on the particular construction of $\pi$ but only on the fact that $\pi$ is a $(\mathcal{P}_{\theta}(\X, \F),\mathcal{I})$-kernel. Indeed, consider $\mu\in\mathcal{G}_\theta(f)$.
Since $(A,\mathcal{B}(A))$ is standard Borel, there exists a countable class of sets, let us say $\mathcal{A}$, stable under finite intersections, that generates $\mathcal{B}(A)$.  Now, consider the class $\mathcal{C}$  of (finite) cylinder sets in $\F$ with basis in $\mathcal{A}$. Observe that $\mathcal{C}$ is stable under finite intersections and generates $\F$.
Now, fix $\Lamb\in \mathcal{Z}_b$ and $B\in \F_{(-\infty,\; m_\Lamb]} \cap \mathcal{C}$. Denoting $\mathcal{T}_{-\infty}:=\cap_{n\geq 1}\F_{(-\infty, -n]}$ and using Lemma 6.5 of \cite{fernandez/maillard/2005} we obtain that
\begin{align*}
\pi^\cdot\gamma_\Lamb(B)&=\mu( \gamma_\Lamb(B\mid \cdot \;) \mid \mathcal{I})\\
&=\mu( \mu(B \mid \F_{(-\infty,\; l_\Lamb)}) \mid \mathcal{I})\\
&=\mu( \mu(B\mid \mathcal{T}_{-\infty}) \mid \mathcal{I})\\
&=\mu(B\mid \mathcal{I})=\pi^\cdot (B), \;\;\; \mu \text{-a.s.}
\end{align*}
Since $ \mathcal{Z}_b$ and $\mathcal{C}$ are countable, by the monotone class lemma,  we deduce that $\mu$-a.s.,
for all $\Lambda\in  \mathcal{Z}_b$ and $C \in \F_{(-\infty,\; m_\Lamb]}$, $\pi^\cdot\gamma_\Lamb(C)=\pi^\cdot (C)$, which shows that $\mu(\pi^\cdot \in  \mathcal{G}(f))=1$.
\qed

\section*{Acknowledgments}
C.G.~thanks FAPESP (grant 2023/07228-9)  and FAEPEX (grant 3073/23) for financial supports. D.Y.T.~thanks CNPq (grant 421955/2023-6). S.G.~thanks FAPESP (fellowship 24/06341-9 and grants 23/13453-5 and 23/07507-5) and CNPq (fellowship 314909/2023-0 and grant 441884/2023-7). D.Y.T and S.G. 
~thanks FAPESP  Research, Innovation and Dissemination 
Center for Neuromathematics (grant 2013/ 07699-0).
\bibliographystyle{alpha}
\bibliography{IPbibli}

\begin{thebibliography}{CCPP12}

\bibitem[Ber86]{berbee/1986}
Henry Berbee.
\newblock Periodicity and absolute regularity.
\newblock {\em Israel Journal of Mathematics}, 55(3):289--304, 1986.

\bibitem[BFG99a]{bressaud_fernandez_galves_1999a}
Xavier Bressaud, Roberto Fern{\'a}ndez, and Antonio Galves.
\newblock Decay of correlations for non-{H}\"olderian dynamics. {A} coupling
  approach.
\newblock {\em Electronic Journal of Probability}, 4:3, 19 pp. (electronic),
  1999.

\bibitem[BFG99b]{bressaud_fernandez_galves_1999b}
Xavier Bressaud, Roberto Fern{\'a}ndez, and Antonio Galves.
\newblock Speed of {$\overline d$}-convergence for {M}arkov approximations of
  chains with complete connections. {A} coupling approach.
\newblock {\em Stochastic Processes and their Applications}, 83(1):127--138,
  1999.

\bibitem[Bra05]{bradley/2005}
Richard~C Bradley.
\newblock Basic properties of strong mixing conditions. a survey and some open
  questions.
\newblock {\em Probability surveys}, 2(2):107--144, 2005.

\bibitem[CCPP12]{cenac/chauvin/paccaut/pouyanne/2010}
P.~C\'enac, B.~Chauvin, F.~Paccaut, and N.~Pouyanne.
\newblock Variable length {M}arkov chains and dynamical sources.
\newblock {\em S\'eminaire de Probabilit\'es XLIV, Lecture Notes in Math.},
  2046:1--39, 2012.

\bibitem[CFF02]{comets_fernandez_ferrari_2002}
Francis Comets, Roberto Fern{\'a}ndez, and Pablo~A. Ferrari.
\newblock Processes with long memory: regenerative construction and perfect
  simulation.
\newblock {\em The Annals of Applied Probability}, 12(3):921--943, 2002.

\bibitem[DSP12]{desantis/piccioni/2010}
E.~De~Santis and M.~Piccioni.
\newblock Backward coalescence times for perfect simulation of chains with
  infinite memory.
\newblock {\em J. Appl. Probab.}, 49(2):319--337, 2012.

\bibitem[FGP20]{ferreira2020non}
Ricardo~F Ferreira, Sandro Gallo, and Fr{\'e}d{\'e}ric Paccaut.
\newblock Non-regular g-measures and variable length memory chains.
\newblock {\em Nonlinearity}, 33(11):6026, 2020.

\bibitem[FM05]{fernandez/maillard/2005}
Roberto Fern{\'a}ndez and Gr{\'e}gory Maillard.
\newblock Chains with complete connections: general theory, uniqueness, loss of
  memory and mixing properties.
\newblock {\em Journal of Statistical Physics}, 118(3-4):555--588, 2005.

\bibitem[Geo11]{georgii2011gibbs}
Hans-Otto Georgii.
\newblock {\em Gibbs measures and phase transitions}, volume~9.
\newblock Walter de Gruyter, 2011.

\bibitem[GGT18]{GGT2018}
Christophe Gallesco, Sandro Gallo, and Daniel~Y Takahashi.
\newblock Dynamic uniqueness for stochastic chains with unbounded memory.
\newblock {\em Stochastic Processes and their Applications}, 128(2):689--706,
  2018.

\bibitem[J{\"O}03]{johansson/oberg/2003}
Anders Johansson and Anders {\"O}berg.
\newblock Square summability of variations of {$g$}-functions and uniqueness of
  {$g$}-measures.
\newblock {\em Mathematical Research Letters}, 10(5-6):587--601, 2003.

\bibitem[J{\"O}P07]{johansson/oberg/pollicot/2007}
Anders Johansson, Anders {\"O}berg, and Mark Pollicott.
\newblock Countable state shifts and uniqueness of g-measures.
\newblock {\em American Journal of Mathematics}, 129(6):1501--1511, 2007.

\bibitem[JS02]{jacod/shiryaev/2002}
Jean Jacod and Albert~N Shiryaev.
\newblock {\em Limit theorems for stochastic processes}, volume 288.
\newblock Springer-Verlag, Berlin, second edition, 2002.

\bibitem[Kea72]{keane/1972}
Michael Keane.
\newblock Strongly mixing {$g$}-measures.
\newblock {\em Inventiones Mathematicae}, 16(4):309--324, 1972.

\bibitem[Led74]{ledrappier/1974}
Fran{\c{c}}ois Ledrappier.
\newblock Principe variationnel et syst\`emes dynamiques symboliques.
\newblock {\em Probability Theory and Related Fields}, 30(3):185--202, 1974.

\bibitem[TvH14]{tong/vanrandel/2014}
Xin~Thomson Tong and Ramon van Handel.
\newblock Conditional ergodicity in infinite dimension.
\newblock {\em The Annals of Probability}, 42(6):2243--2313, 2014.

\bibitem[Wal75]{walters/1975}
P.~Walters.
\newblock Ruelle's operator theorem and $g$-measures.
\newblock {\em Transactions of the American Mathematical Society},
  214:375--387, 1975.

\end{thebibliography}

\end{document}